\theoremstyle{plain}
\newtheorem{theorem}{Theorem}
\newtheorem{proposition}[theorem]{Proposition}
\newtheorem{lemma}[theorem]{Lemma}
\newtheorem{corollary}[theorem]{Corollary}
\theoremstyle{remark}
\newtheorem{remark}[theorem]{Remark}
\theoremstyle{definition}
\newtheorem{definition}[theorem]{Definition}
\newtheorem{example}[theorem]{Example}
\newcommand{\precnprec}{\mathop{\prec \overset{n}{\cdots } \prec}}
 \newcommand{\tbigwedge}{\mathop{\textstyle \bigwedge }}
 \newcommand{\tbigvee}{\mathop{\textstyle \bigvee }}
\begin{document}

\title{A new approach on distributed systems: orderings and representability}






 \author{ Asier Estevan Muguerza}
\noindent {Universidad P\'ublica de Navarra, \\Dpto. EIM,  INARBE Institute\\
Campus Arrosad\'{\i}a. Pamplona, 31006, Spain.\\ asier.mugertza@unavarra.es} 


 \begin{abstract} 
In the present paper we propose a new approach on `distributed systems': the processes are represented through total  orders and the communications are characterized by means of biorders. The resulting distributed systems capture situations met in various fields (such as computer science, economics and decision theory). We investigate questions associated to the numerical representability of order structures, relating concepts of economics and computing to each other.  The concept of `quasi-finite partial orders' is introduced as a finite family of chains with a  communication between them. The representability of this kind of structure is studied, achieving a construction method for a  finite (continuous) Richter-Peleg  multi-utility representation.

  \end{abstract}
 
   \textbf{{Keywords:}}
   
Distributed systems,  partial orders, biorders, representability.

\maketitle


 \section{Introduction and motivation} \label{s1} 
 
 \setcounter{footnote}{0}
 \renewcommand{\thefootnote}{\arabic{footnote}}

In the present paper we focus on an ordered structure known as \emph{distributed system}. Although this concept belongs primarily to the field of computer science, its mathematical structure is common to many areas.

The representability issue appears too in a wide range of fields, such as  economics and decision making  
 \cite{Alc,ales,BRME,chateau,EvO,Luc,Peleg,Nish}, computing \cite{fid,dcomp,Lamport,virtual,ray,L1}, mathematical psychology \cite{Doig,biorderintro,Naka,valued}, etc. This interest on the representability of relations is usually due to  maximization problems (in economics and decision making),  the need to control non-linear processes that are being executed (computer science), the convenience of transform qualitative scales into quantitative ones (mathematical psychology), etc.

\medskip

Hence,
one of the goals of the present work is to bring to the computer field knowledge from other areas (order theory and decision theory in mathematics or  economics, for instance) that may be helpful when dealing with distributed systems.
For this purpose, first we redefine the concepts of a distributed system. For that,
 we shall use the concept of biorder. 
This study was started in \cite{dsjmp} for the particular case of distributed systems of two processes.   However, in order to formalize completely the concept of distributed system, a further study is needed, linking biorders between  $n$  totally ordered sets. 
 
 The idea of a biorder was studied  by Guttman (see \emph{Guttman Scales} in \cite{gut}) and by Riguet under the name of \emph{Ferrers relation} \cite{rig}. However, the concept   was introduced for first time by Andr\'e Ducamp and Jean-Claude Falmagne in 1969 in \cite{biorderintro}, and studied in depth  in 1984 by Jean-Paul Doignon, Andr\'e  Ducamp and Jean-Claude Falmagne in \cite{Doig}.  It is defined as follows:
 
 \emph{A biorder $\mathrel{\mathcal{R}}$ from $A$ to $X$ is a binary relation, with $\mathrel{\mathcal{R}}\subseteq A\times X$, satisfying that  for
every $a, b \in A$ and $x, y \in X$ 
$(a\mathrel{\mathcal{R}} x) \wedge(b\mathrel{\mathcal{R}} y) $ implies $ (a\mathrel{\mathcal{R}} y) \vee(b\mathrel{\mathcal{R}} x)$.}
 

\medskip

 The concept of biorder can also  be found just as a Ferrers relation \cite{ales,Doig}, that is, as a relation $\mathrel{\mathcal{R}}$ on a single set $X$ such that for any $x,y,z,t\in X$ it holds that $x\mathrel{\mathcal{R}} y$ with $ z\mathrel{\mathcal{R}} t$ implies that $x\mathrel{\mathcal{R}} t$ or $z\mathrel{\mathcal{R}} y$.
 Other kind of relations such as interval orders \cite{ales,Bio,subm,BRME,Fis2,f3,Sco,Tve} (in particular, the more restrictive  case of a semiorder \cite{gurea2,gurea1,gurea3,Luc}), may be considered particular cases of biorders \cite{ales,Doig}.

   In the present paper we focus on biorders defined between disjoint totally ordered sets (see Figure\ref{Fbi}), and we use them in order to redefine the concept of distributed system \cite{fid,Lamport,virtual,ray}.

 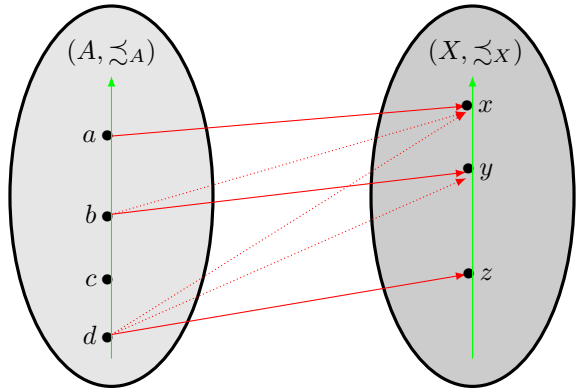
\begin{figure}[htbp]
\begin{center}
\begin{tikzpicture}[scale=0.8]
\begin{scope}[very thick]

\begin{scope}[very thick]

\draw[fill=gray!20!white] (-3,0) 
ellipse (48pt and 90pt);

\draw[fill=gray!40!white] (3,0) 
ellipse (48pt and 90pt);

\end{scope}

\end{scope}
\draw[red, -latex] (-3,-0.3) -- (2.9,0.4);
\draw[red, -latex] (-3,1) -- (2.9,1.5);
\draw[red, -latex] (-3,-2.3) -- (2.9,-1.3);

\draw[red, densely dotted, -latex] (-3,-2.3) -- (2.9,1.4);
\draw[red, densely dotted, -latex] (-3,-0.3) -- (2.9,1.4);
\draw[red,densely dotted, -latex] (-3,-2.3) -- (2.9,0.3);
    \draw (-2.8,-0.3) node[anchor=east] { $b\, \bullet$}; 
      \draw (-2.8,1) node[anchor=east] { $a\, \bullet$}; 
    \draw (-2.8,-1.4) node[anchor=east] { $c\, \bullet$};   
     \draw (-2.8,-2.3) node[anchor=east] { $d\,\bullet$};    
       \draw (3.5,0.4) node[anchor=east] { $\bullet \, y$};  
        \draw (3.5,1.5) node[anchor=east] { $ \bullet \, x$}; 
    \draw (3.5,-1.3) node[anchor=east] { $ \bullet \, z$};

\draw[green, -latex] (-3,-2.7) -- (-3,2);
\draw[green, -latex] (3,-2.7) -- (3,2);
\draw (-2.1,2.4) node[anchor=east] { $(A, \precsim_A)$} ;
\draw (4,2.4) node[anchor=east] { $(X, \precsim_X)$} ;

\end{tikzpicture}\\
\medskip
$ $

\caption{ 
A biordered pair of totally preodered sets.
}
\label{Fbi}
\end{center}\end{figure}

In a distributed system, distinct computers are connected to each other in order to achieve a common goal, this is known as \emph{distributed computing}.

These computers communicate with each other through messages that are sent and received. 
Each  computer  has  its own  internal  (physical)  clock,  so that it is possible to assing a number (a time) to each event of the proceess. Thus, from a mathematical point of view, each computer (or \emph{process}) is a totally ordered set (i.e., a chain) which can be represented through its local time.  
Hence, without a precise clock synchronization it is not possible to capture the causality relation between events of distinct processes. Moreover, if an event $b$ holds \emph{later}  (with respect to the time) than $a$, it does not imply that $a$ causally affects  $b$. On the contrary, if $a$ causally affects $b$, then $b$ must hold \emph{later} (with respect to an idyllic global time) than $a$. 
Finally, in these structures, the property called \emph{causal ordering of messages}  is  usually\footnote{Here we say \emph{usually} since it is a common property which is not implicit in Definition~\ref{Dds}.} satisfied:   if a computer $i$ sends two messages $m_1$ and then $m_2$ (so, such that $m_1$ has been sent before $m_2$) to the same computer $j$ ($i\neq j$), then  message $m_1$ must be received before  message $m_2$ (see Figure~\ref{Fcausalorderm}). \cite{Lamport,dcomp}

 \begin{figure}[htbp]
\begin{center} 
\begin{tikzpicture}[scale=1.2] 
\draw[fill=gray!20!white] (0,1) 
ellipse (20pt and 45pt);

\draw[fill=gray!20!white] (2,1) 
ellipse (20pt and 45pt);

    \draw[-latex] (0,0)--(0,2);
    \draw[-latex] (2,0)--(2,2);
        \draw[dashed, -latex] (0,2)--(2,0);
        \draw[dashed, -latex] (0,0)--(2,2);
\draw (0,2) node[anchor=east] {\small $a_2$};  
\draw (0,0) node[anchor=east] {\small $a_1$};  
\draw (2.45,2) node[anchor=east] {\small $b_2$};  
\draw (2.45,0) node[anchor=east] {\small $b_1$};  

\draw (0.17,2) node[anchor=east] {$\bullet$};  
\draw (0.17,0) node[anchor=east] {$\bullet$};   
\draw (2.2,2) node[anchor=east] {$\bullet$};  
\draw (2.2,0) node[anchor=east] {$\bullet$};  
\end{tikzpicture}\\
\medskip\vspace{0.4cm}
$ $
\caption{The vertical arrows represent the sequence of events of each process, that is, the direction of the time. The dashed arrows represent the sending of messages, from the sender to the receiver. Here, the causal ordering of messages is not satisfied.}
\label{Fcausalorderm}
\end{center}
\end{figure}
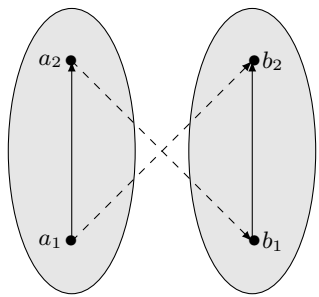

The concept of a \emph{distributed system} is usually defined as Lamport did  \cite{Lamport}:

\begin{definition}\label{Dds}\rm
An \emph{event} (illustrated by a point in Figure~\ref{Fbi} and Figure~\ref{FLam}) is a uniquely identified runtime instance of an atomic action of interest. It is an occurrence at a point in time, i.e., a happening at a cut of the timeline, which itself does not take any time. 
A \emph{process} (illustrated by a vertical line in Figure~\ref{Fbi} and Figure~\ref{FLam}) is a sequence of totally ordered events, thus, for any event $a$ and $b$ in a process, either $a$ comes before $b$ or $b$ comes before $a$. 
 A \emph{distributed system} consists of a collection of distinct processes which are spatially separated, 
  and which communicate with one another by exchanging messages (illustrated by red arrows in Figure~\ref{Fbi} and wavy arrows in Figure~\ref{FLam}). 
 It is assumed that sending or receiving a message is an event.
 \end{definition}

  Since each process consists of a sequence of events, each process  is a totally ordered set, 
  and the communication through messages between the processes will be defined by means of  biorders. 
 
 \begin{figure}[htbp]
 \begin{center}
\includegraphics[scale=0.85]{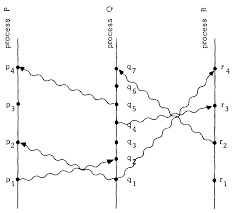}
 \caption{Illustration of a distributed system, taken from the  paper \cite{Lamport} of Leslie Lamport.}
\label{FLam}
\end{center}\end{figure}

 Moreover, this communication between  processes 
  defines a causal relation known as `causal precedence' or `happened before' relation \cite{fid,Lamport,virtual,ray}  (common in causality too \cite{Panan}, but now related to the theory of relativity, see also \cite{buse,penro}). 
 This causal relation 
  was defined by   Leslie Lamport in \cite{Lamport} as follows (see also \cite{dcomp}):

\begin{definition}\label{Dcp}\rm
 The  \emph{causal precedence} (denoted by $\rightarrow$) on the set of events of a distributed system is  the smallest relation satisfying the following three conditions:
 
\begin{enumerate}
\item If $a$ and $b$ are events in the same process, and $a$ comes before $b$, then $a\rightarrow b$.
\item If $a$ is the sending of a message by one process and $b$ is the receipt of the same message by another
process, then $a \rightarrow b$.
\item If $a \rightarrow b$ and $b \rightarrow c$ then $a \rightarrow c$.
\end{enumerate}
 
\noindent
 If $a \rightarrow b$, then it is said that $a$ \emph{causally precedes} $b$.
 
\end{definition} 
 
This definition was introduced by Leslie Lamport  in 1978 (see  \cite{Lamport}) and it has been used until nowadays. 
In the present paper we shall introduce a new definition of the concept through orderings.
Graphically, $ a\rightarrow b$ implies that there is  a  path of causality from event $ a$ to event $b$ (moving in the direction of the arrows, see Figure~\ref{Fbi} or Figure~\ref{FLam}), that is., $b$ is reachable from $ a$.

Two distinct events $a$ and $b$ are \emph{concurrent} if $a \nrightarrow b$ and $b \nrightarrow a$, that is, if they in no way can causally affect each other, so in that case it is not known which event happened first. It is assumed that $\rightarrow$ is  
irreflexive ($a \nrightarrow a$ for any event $a$), so, in case there are no cycles, $\rightarrow$ is a strict partial ordering on the set of all events in the system. 

\medskip

 Finally, we shall focus too on
finite Richter-Peleg multi-utility representations. The construction of these (continuous) representations for a given preorder may be a hard problem. 
 For this purpose, the study of the  (continuous, in case the sets are endowed with topologies) representability of biorders defined between totally preordered sets seems a right approach in order to achieve a  (continuous and finite) Richter-Peleg multi-utility representation of the corresponding causal precedence or happened before relation. 
 This idea consists in using (continuous) representations of distributed systems in order to represent (continuously) the corresponding causal precedence relation. 

  \medskip
 
 The structure of the paper goes as follows:
 
 After this introduction, a section of preliminaries is included.
 Next, in Section~\ref{sdefds} a new definition (and its motivation) of a distributed system is introduced, so that then the representability problem is studied in Section~\ref{srepds}, achieving an aggregation result for the case of line communications. In this section
 it is shown how to construct weak representations of distributed systems with line communications starting from pairs of functions that represent each biorder.
 Finally, in Section~\ref{snf} we focus on quasi-finite partial orders as an interesting family of partial orders. For this kind of orderings we also include a technique in order to construct a finite (and continuous) Richter-Peleg multi-utility.

 \section{Notation and preliminaries} \label{s2}

 From now on $A, B$ and $X$ as well as $X_1, ..., X_n$ will denote  nonempty (maybe infinite) sets.    When we speak of continuity of a real-valued mapping defined on a set $S$, we assume that some topology $\tau_S$ is given on $S$.

  \begin{definition}\label{firstdefi}\rm
  A \emph{binary relation} $\mathrel{\mathcal{R}}$ from $A$ to $X$ is a subset of the Cartesian product $A \times X$. In particular,  in the case that $A=X$, 
the binary relation $\mathrel{\mathcal{R}}$ is said to be defined  on $X$, and it is a subset of the Cartesian product $X \times X$. Given two elements $a\in A$ and $x\in X$, 
 we will use  notation $a \mathrel{\mathcal{R}} x$  
  to express that the pair $(a,y)$ 
   belongs to $\mathcal {R}$. 
 Associated to a binary relation $\mathrel{\mathcal{R}}$ from $A$ to $X$, 
  its \emph{negation} \rm 
   is the binary relation $\mathrel{\mathcal{R}}^c$ 
     from $A$ to $X$ 
   given by $(a,x) \in \mathrel{\mathcal{R}}^c \iff (a,x) \notin \mathrel{\mathcal{R}}$ for every $a\in A $ and $ x \in X$.

 
 Given two binary relations $\mathrel{\mathcal{R}}$ and $\mathrel{\mathcal{R}'}$ on a set $X$, it is said that $\mathrel{\mathcal{R}'}$ \emph{extends} or \emph{refines}  $\mathrel{\mathcal{R}}$ if 
 $x \mathrel{\mathcal{R}} y$ implies  $x \mathrel{\mathcal{R}'} y$, that is, if  $ \mathrel{\mathcal{R}} $ is contained in  $ \mathrel{\mathcal{R}'}.$
 
  The \emph{transitive closure} of a binary relation $\mathrel{\mathcal{R}}$ on a set $X$ is the transitive relation $\mathrel{\mathcal{R}}^+$ on set $X$ such that $\mathrel{\mathcal{R}}^+$ contains $\mathrel{\mathcal{R}}$ and $\mathrel{\mathcal{R}}^+$ is minimal.
 
The \emph{transitive reduction} of a binary relation $\mathrel{\mathcal{R}}$ on a set $X$ is, in case it exists,  the  smallest relation having the transitive closure of $\mathrel{\mathcal{R}}$ as its transitive closure.
 
Given a binary relation  $\mathrel{\mathcal{R}}$ on $X$, if two elements $x,y\in X$ cannot be compared, that is,    $\neg(x \mathrel{\mathcal{R}} y)$ as well as $\neg(y \mathrel{\mathcal{R}} x)$, then it is denoted by $ x \bowtie y $. We shall denote  $x\mathcal{I}y$ whenever $x\mathcal{R}y$ as well as  $y\mathcal{R}x$. 

 \end{definition} 
\smallskip
  
 Sometimes (depending on the ordering or when   different relations are mixed) the standard notation is different. We also include it here. 

 \begin{definition}\rm
  A \emph{preorder} $\precsim$ on $X$ is a binary relation on $X$ which is reflexive and transitive. 
 An antisymmetric preorder is said to be an \emph{order}. A \emph{total preorder} \rm $\precsim$ on a set $X$ is a preorder such that if $x,y \in X$ then $[x \precsim y] \vee [y \precsim x]$. A total order is also called a \emph{linear order}\rm, and a totally ordered set $(X,\precsim)$ is also said to be a \emph{chain}\rm.  Usually, an order that fails to be total is also said to be a  \emph{partial order} \rm and it is also denoted by $\preceq$. 
A subset $Y$ of a partially preordered set $(X, \precsim)$ is said to be an \emph{antichain} if $ x\bowtie y $ for any $x,y\in Y$.

If $\precsim$ is a preorder on $X$, then  the associated \emph{asymmetric} relation or \emph{strict preorder} is denoted by $\prec$ and the
associated \emph{equivalence} relation by $\sim$
and these are defined, respectively, by $[x \prec y \iff (x \precsim y) \wedge\neg(y \precsim x)]$ and $[x \sim y \iff (x\precsim y) \wedge (y \precsim x)]$. 
In the case of a finite partial order (also known as \emph{poset}), it is quite common to denote $\precsim$ by $\sqsubseteq$ and $\prec$ by $\sqsubset$, respectively.
The asymmetric part of a linear order (respectively, of a total preorder) is said to be a \emph{strict linear order}  (respectively, a \emph{strict total preorder}). 
 \end{definition}

\begin{definition}\label{Dnearcomplete}\rm
A preorder $\precsim$ on $X$ is said to be \emph{near-complete} if \mbox{$width(X, \precsim)$} $=n<\infty$. That is, if all antichains have cardinalities less or equal than $n$ (for some  $n\in \mathbb{N}$) as well as there is --at least-- one antichain which cardinality is $n$.
\end{definition}

\begin{definition}\rm
 A binary relation $\prec$ from $A$ to $X$ is 
a \emph{biorder} if it is Ferrers, that is, if for
every $a, b \in A$ and $x, y \in X$ the following condition holds:\\ 
$(a\prec x) \wedge(b\prec y) \Rightarrow   (a\prec y) \vee(b\prec x).$

 Related to $\prec$ we shall use the binary relation $\precsim$ from $X$ to $A$ given by $x\precsim a \iff \neg (a\prec x)$, $a\in A, \, x\in X$. It is also common to use $\succsim$ from $A$ to $X$ given by $a\succsim x \iff \neg (a\prec x)$, $a\in A, \, x\in X$.
\end{definition}

 \begin{definition} \label{tr}\rm
 Associated to a biorder
$\prec$ defined   from $A$ to $X$, we shall consider two new binary relations \cite{chateau,Doig}. 
 These binary relations are said to be the \emph{traces} \rm of $\prec$. They are defined on $A$ and $X$, respectively, and denoted by $\prec^*$,  $\prec^{**}$.  They are defined as follows: 

 First, $a \prec^* b \iff a \prec z \precsim b \ $
{\ for \ some \ } $ z \in X \ \ (a,b \in A),$ and, similarly,
$x \prec^{**} y \iff x \precsim c \prec y \ $
{\ for \ some \ }$ c \in A \ \ (x,y \in X).$ 
 \end{definition}
 
\begin{remark}\label{Rtraza}
 In the case of interval orders (so $A=X$) the binary relations denoted by $\prec^*$ and $\prec^{**}$ coincide with the ``\emph{left trace}" \rm  and ``\emph{right trace}" \rm  of the interval order. 
The names ``\emph{left trace}" \rm  and ``\emph{right trace}" have been used in the case of biorders too \cite{frontiers}, and other notations such as $\prec_A$ and $\prec_X$ or $\prec^l$ and $\prec^r$ can be found in literature \cite{frontiers,Doig,Naka}.  \end{remark}
\begin{remark} \label{rty}
We set $a \precsim^* b \iff \neg (b \prec^* a)$, \ \ $a \sim^*
b \iff a \precsim^* b \precsim^* a$, \ $x \precsim^{**}
y \iff \neg
(y \prec^{**} x)$ \ and \ $x \sim^{**}
y \iff x \precsim^{**} y \precsim^{**} x$.

These weak relations can be characterized as follows \cite{ales,Bio,Doig}:
\[ a\precsim^* b \iff \{b\prec x \Rightarrow  a\prec x\} , \text{ for any } x\in X. \]
\[ x\precsim^{**} y  \iff \{a\prec x \Rightarrow  a\prec y\} , \text{ for any } a\in A. \]
 
As a matter of fact, 
both the binary relations $\precsim^*$ and $\precsim^{**}$ are total preorders on $A$ and on $X$, respectively, if and only if the relation $\prec$ is a biorder \cite{Doig}. Hence, in that case the indifference relations $\sim^*$ and $\sim^{**}$ are in fact equivalence relations so, it is possible to define the quotient set $A\slash \sim^*$ and   $X\slash \sim^{**}$\cite{BRME,Doig}. 
\end{remark}

 Next Definition~\ref{lkeste} introduces the notion of representability\footnote{Other notions of representability appear for instance in  \cite{ales,Doig,gurea1,Naka}.} for total preorders and biorders. The goal of a representation is to convert a qualitative preference into a quantitative one, comparing real numbers instead of  elements of a set. 

 \begin{definition} \label{Dmulti}\rm

  Given a preorder $\precsim $ on $X$, a real function $u\colon X \to \mathbb{R}$ is said to be \emph{isotonic} or \emph{increasing} if for every $x ,y \in X$ the implication $x\precsim y \Rightarrow u(x)\leq u(y)$ holds true. In addition, if it also holds true that $x\prec y $ implies $u(x)< u(y)$, then $u$ is said to be a \emph{Richter-Peler utility representation}.
  
   A (not necessarily total) preorder $\precsim$ on a set $X$   is said to have a {\em
multi-utility representation} \cite{EvO}  if  there exists a family $\mathcal{U}$ of isotonic real functions such that 
for all points $x ,y \in X$  the equivalence
$\{x \precsim y \Leftrightarrow  \forall u \in {\mathcal{U}} \,\,(u(x) \leq u(y))\}$ 
 holds.   
 
 This kind of representation  always exists for every preorder $\precsim$ on $X$ (see Proposition~1 in  
 \cite{EvO}). It is also interesting to search for a {\em continuous multi-utility representation} of a preorder $\precsim$ when the set $X$ is endowed with a topology $\tau$  (cf., for instance, \cite{Alc,EvO}), as well as for multi-utility representations through a finite number of functions.

 When all the functions of the family $\mathcal{U}$  
are {\em order-preserving} for the preorder $\precsim$ (i.e., for all $u \in {\mathcal{U}}$, and $x,y \in X$, $x \prec y$ implies that $u(x) < u(y)$), then the representation is  called {\em Richter-Peleg multi-utility representation} 
\cite{Min2}. 
  \end{definition}

  In case of a poset, we shall use too the following concept\footnote{A similar term already exists in computer science under the name of \emph{random structures} (see \cite{L1}).}.

\begin{definition}\label{lakeste}\rm
 Let $(X,\sqsubseteq )$ be a finite partially ordered set with $|X|=n$. We shall say that a Richter-Peleg multi-utility representation  $\mathcal{U}$ is \emph{bijective} when each function $u\in \mathcal{U}$ is a bijection from $X$ to $\{1,...,n\}$.
 \end{definition}
\begin{remark}
The number of functions needed for a Richter-Peleg multi-utility coincides with the dimension of the partial order \cite{trotter}.
\end{remark}

 \begin{definition} \label{lkeste}\rm
 
  A total preorder $\precsim$ on $X$ is called \emph{representable} \rm if there is a real-valued function $u\colon X\to \mathbb R$ that is order-preserving, so that, for every $x, y \in X$, it holds that $[x \precsim y \iff u(x) \leq u(y)]$. The map $u$ is said to be an \emph{order-monomorphism} \rm (also known as a \emph{utility function} \rm for $\precsim$).

A biorder $\prec$ from $A$ to $X$  is
said to be \emph{representable} (as well as \emph{realizable with respect to $<$}) if there exist two real-valued functions $u \colon A \to \mathbb{R}$, $v \colon X \to \mathbb{R}$  such that $a\prec x \iff u(a)<v(x)$ ($a\in A$, $x\in X$). In this case it is also said
that the pair $(u,v)$ \emph{represents} $\prec$.
\medskip

Although we will work with this definition of representability for biorders  (realizable with respect to $<$), in Section~\ref{srepds} the following definition (introduced in  \cite{Doig}) is also needed:

A biorder $\prec$ from $A$ to $X$  is
said to be \emph{representable with respect to $\leq$} (as well as \emph{realizable with respect to $\leq$}) if there exist two real-valued functions $u \colon A \to \mathbb{R}$, $v \colon X \to \mathbb{R}$  such that $a\prec x \iff u(a)\leq v(x)$ ($a\in A$, $x\in X$). In this case we shall say 
that the pair $(u,v)$ represents $\prec$ \emph{with respect to $\leq$}.
\end{definition}

 \begin{definition} \label{conti} \rm  
Let $\prec$ be an asymmetric relation from a topological space $(A,\tau_A)$ to $(X,\tau_X)$. The relation $\prec$ is said to be \emph{$\tau_A$-continuous} \rm if the strict contour set 
$L_\prec (x) = \{ a \in A: a \prec x \}$ is a $\tau_A$-open set, for every $x \in X$. 
Dually, it is said to be \emph{$\tau_X$-continuous} \rm if the strict contour set $U_\prec (a) = \{ x \in X: a \prec x \}$ is a $\tau_X$-open set, for every $a \in A$.
We shall say that the relation is  \emph{bicontinuous} \rm if it is both $\tau_A$-continuous and $\tau_X$-continuous.

In particular, in the case of a single set $X$ (that is, $A=X$)  endowed with a single topology $\tau$ (so, $\tau=\tau_A=\tau_X$), the binary relation $\prec$  is said to be \emph{upper semi-continuous} if the strict contour set  $L_{\prec} (x) = \{ y \in X: y \prec x \}$ is $\tau$-open,  for every $x \in X$.  Dually, it is said to be 
\emph{lower semi-continuous} if the strict contour set
 $U_{\prec} (x) = \{ y \in X: x \prec y \}$  is $\tau$-open,  for every $x \in X$. 
 We shall say that the relation is  \emph{$\tau$-continuous} \rm if it is both upper and lower semi-continuous.

 \end{definition} 



\begin{definition}\rm
A biorder $\prec$ from $A$ to $X$  is
said to be \emph{continuously representable on $(A,\tau_A)$} \rm if it admits a  representation $(u,v)$ such that the  function $u\colon A \to \mathbb{R}$ is continuous when $A$ is given the topology $\tau_A$ and the real line is given its usual topology.
Dually, $\prec$ is said to be \emph{continuously representable on $(X,\tau_X)$} \rm if it admits a  representation $(u,v)$ such that the  function $v\colon X \to \mathbb{R}$ is continuous when $X$ is given the topology $\tau_X$ and the real line is given its usual topology. We say that the biorder is \emph{continuously representable} if it admits a  representation $(u,v)$ such that both  functions $u$ and $v$ are continuous.
\end{definition}

An example of a biorder (also related to the traces and the continuity of the representation) may be found  in \cite{dsjmp}. 
 %
%
%
%
  %
 %
 Let us recall now some characterizations of the representability, for total preorders and  biorders. 
 
 \begin{definition} \label{grande} \rm  
A total preorder $\precsim$ defined on $X$ is said to be \emph{perfectly separable} \rm if there exists a countable subset $D \subseteq X$ such that for every $x,y \in X$ with $x \prec y$ there exists $d \in D$ such that $x \precsim d \precsim y$. 

 Let $\prec$ be a biorder from $A$ to $X$. A subset $M$ of $A\cup X$ is said to be
 \emph{strictly dense}
   (see \cite{ales,Doig}) if for all $a\in A$ and $x\in X$, $a\prec x$ implies the existence of an element $m\in M$ such that  either 
 $m\in X \text{ and } a\prec m\precsim^{**} x$, or   $m\in A \text{ and } a\precsim^* m\prec x$.
\end{definition} 

\begin{remark}\label{Rcont}
Notice that if the number of classes defined by the equivalence relation $\sim^*$ (or $\sim^{**}$) of the trace $\precsim^*$ (resp. $\precsim^{**}$) on $A$ (resp. $X$) is countable, then the biorder has a trivial  strictly dense subset  made by means of the representatives of classes $A\slash \sim^*$ (resp. $X\slash \sim^{**}$).
\end{remark}

The following result is well-known \cite{BRME,Doig}. 

 \begin{theorem} \label{qwert} Let $A$ and $X$ be two nonempty sets.
 \begin{enumerate}[\em(a)]
 \item[\em(a)] A total preorder $\precsim$ on $X$ is representable if and only if it is perfectly separable.
  \item[\em(b)] A biorder $\prec$ from $A$ to $X$ is representable if and only if there exists a countable strictly dense subset.
\end{enumerate}
\end{theorem}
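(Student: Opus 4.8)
The plan is to treat the two statements separately and to derive part~(b) from part~(a).

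Part~(a) is the classical Birkhoff--Milgram/Debreu characterization, so I would prove both implications directly. For the ``only if'' direction, suppose $u$ is a utility function for $\precsim$. The image $u(X)\subseteq\mathbb{R}$ is second countable, hence it carries a countable order-dense subset: concretely, I would take a countable set meeting every nonempty open interval of $u(X)$ together with both endpoints of each of the (necessarily countably many, since each contains a distinct rational) maximal jumps of $u(X)$. Choosing one preimage of each selected value yields a countable $D\subseteq X$ witnessing perfect separability. For the ``if'' direction, let $D=\{d_1,d_2,\dots\}$ be countable as in Definition~\ref{grande} and define
\[
u(x)=\sum_{\{n\,:\,d_n\prec x\}}2^{-n}+\tfrac12\sum_{\{n\,:\,d_n\sim x\}}2^{-n}.
\]
Monotonicity $x\precsim y\Rightarrow u(x)\le u(y)$ is immediate from transitivity. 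For strictness, if $x\prec y$ then perfect separability yields some $d_m$ with $x\precsim d_m\precsim y$; since $x\prec y$ we cannot have both $x\sim d_m$ and $d_m\sim y$, so $d_m$ lands in a strictly higher bucket for $y$ than for $x$, and as every summand is monotone this forces $u(x)<u(y)$.

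For part~(b), the strategy is to collapse the biorder into a single total preorder on the disjoint union $Z=A\cup X$ and then invoke part~(a). By Remark~\ref{rty} the traces $\precsim^*$ and $\precsim^{**}$ are total preorders on $A$ and $X$; I would extend them to a relation $\preccurlyeq$ on $Z$ by keeping $\precsim^*$ inside $A$ and $\precsim^{**}$ inside $X$, declaring $a\preccurlyeq x\iff a\prec x$ and $x\preccurlyeq a\iff\neg(a\prec x)$ for $a\in A$, $x\in X$. Totality between an element of $A$ and one of $X$ is then built in. The crucial step is verifying that $\preccurlyeq$ is transitive; this is exactly where the Ferrers condition enters, via the characterizations of $\precsim^*$ and $\precsim^{**}$ recalled in Remark~\ref{rty} (for example $a\prec x$ with $x\precsim^{**}y$ gives $a\prec y$, and $a\precsim^*b$ with $b\prec x$ gives $a\prec x$, while the genuinely mixed case $a\prec x$ and $\neg(b\prec x)$ forces $a\prec^*b$ directly by applying the Ferrers property to $a\prec x$ and a hypothetical $b\prec z$). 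Once $\preccurlyeq$ is a total preorder, a countable strictly dense subset $M\subseteq Z$ is precisely what makes $\preccurlyeq$ perfectly separable, since the two clauses of strict density assert that whenever $a\prec x$ there is an element of $M$ lying weakly between $a$ and $x$ in $\preccurlyeq$. Applying part~(a) produces $w\colon Z\to\mathbb{R}$ with $p\preccurlyeq q\iff w(p)\le w(q)$, and setting $u=w|_A$, $v=w|_X$ gives $a\prec x\iff w(a)<w(x)\iff u(a)<v(x)$. For the converse I would reverse this: given $(u,v)$, separate the images $u(A)$ and $v(X)$ on the real line by a countable set of reals and pick preimages in $A$ and $X$, one for each relevant gap, to assemble a countable strictly dense $M$.

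The step I expect to demand the most care is the transitivity verification for $\preccurlyeq$ in part~(b): it breaks into cases according to how the three arguments are distributed between $A$ and $X$, and the mixed cases are the ones that truly require the biorder (Ferrers) hypothesis rather than merely the trace preorders. A secondary technicality, in both ``only if'' directions, is the bookkeeping of jumps, which must be arranged so that the resulting dense, respectively strictly dense, sets are genuinely countable.
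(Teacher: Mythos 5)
Your part (a) is correct: both directions are the classical Birkhoff--Milgram argument (countable dense-plus-jump-endpoints set extracted from the image of a utility, and the weighted $\sum 2^{-n}$ construction back), and they are carried out properly. The ``only if'' direction of part (b) can also be completed along the lines you sketch, since $v(m)\leq v(x)$ forces $m\precsim^{**}x$ and $u(a)\leq u(m)$ forces $a\precsim^{*}m$ under any $<$-representation. Note that the paper itself offers no proof of Theorem~\ref{qwert} --- it cites \cite{BRME,Doig} as well known --- so your attempt can only be measured against the standard arguments in those references, which for part (b) are genuinely more involved than a reduction to part (a).

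The genuine gap is in the ``if'' direction of part (b), at the sentence claiming that a countable strictly dense subset ``is precisely what makes $\preccurlyeq$ perfectly separable.'' Perfect separability of $(Z,\preccurlyeq)$ requires countable witnesses for \emph{every} strict pair of $\preccurlyeq$, while strict density only supplies them for the pairs $a\prec x$ with $a\in A$, $x\in X$. The pairs inside $A$ and inside $X$ can still be rescued by a Ferrers argument, but the pairs with $x$ strictly below $a$ cannot: by your definition, $x\preccurlyeq a$ and $\neg(a\preccurlyeq x)$ hold whenever $\neg(a\prec x)$, so your amalgam never declares an element of $A$ indifferent to an element of $X$, and \emph{every} non-relation becomes a strict pair. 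Concretely, let $A$ and $X$ be disjoint copies of $\mathbb{R}$ and put $a\prec x\iff a<x$ as reals. This is a biorder, it is representable ($u=v=\mathrm{id}$), and the rationals in $X$ form a countable strictly dense subset; but your $\preccurlyeq$ is then the lexicographic order on $\mathbb{R}\times\{1,0\}$, in which, for every real $s$, the two copies of $s$ form a jump whose only weak witnesses are those two points themselves. Hence any separating set must be uncountable, $\preccurlyeq$ is not perfectly separable (indeed not representable), and part (a) cannot be invoked. The obstruction is essential, not bookkeeping: a $<$-representation of a biorder may take $u(a)=v(x)$, a ``tie'' encoding $\neg(a\prec x)$, whereas your uniform choice $x\preccurlyeq a\iff\neg(a\prec x)$ converts all such ties into strict inequalities of the amalgam; deciding \emph{which} non-relations should instead become indifferences is exactly the delicate content of the Doignon--Ducamp--Falmagne theorem, and it cannot be bypassed this way.
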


  A similar study on representability of interval orders, semiorders (see also \cite{gurea2}) and total preorders but now in the extended real line $\bar{\mathbb{R}}$  appears in \cite{gurea1}.

\medskip

 Given a biorder $\prec$ from  $A$ to $X$, it is possible to define the corresponding quotient sets as well as a new relation (see \cite{ales}) $\mathrel{\widehat{\prec}}$ from  $A\slash\sim^*$ to $X\slash\sim^{**}$ by
 $$\widehat{a}\mathrel{\widehat{\prec}} \widehat{x}\iff a\prec x,  \text{ for any } \widehat{a}\in A\slash\sim^*, \, \widehat{x}\in X\slash\sim^{**}.$$ 
 {  It is known (see \cite{ales}) that this relation $\mathrel{\widehat{\prec}}$ is well defined and that it is actually a biorder: the \emph{quotient biorder}.}
 It also holds true that $\widehat{a}\mathrel{\widehat{\prec}}^* \widehat{b}\iff a\prec^* b$,  and  $\widehat{x}\mathrel{\widehat{\prec}}^{**} \widehat{y}\iff x\prec^{**} y$,  for any  $\widehat{a}, \widehat{b}\in A\slash\sim^*$ and  for any $\widehat{x}, \widehat{y}\in X\slash\sim^{**}$.
 Thus,  
 any representation $(\widehat{u},\widehat{v})$ of $\mathrel{\widehat{\prec}}$ has the additional property that $\widehat{u}$ and  $\widehat{v}$ also represent the traces $\mathrel{\widehat{\prec}}^*$ and $\mathrel{\widehat{\prec}}^{**}$, respectively.

  Moreover, any representation $(\widehat{u},\widehat{v})$ of $\mathrel{\widehat{\prec}}$ delivers also a representation of $\prec$ just defining $u(a)=\widehat{u}(\widehat{a})$ and $v(x)=\widehat{v}(\widehat{x})$  (for any $a\in A$ and $x\in X$). Therefore, if $\mathrel{\widehat{\prec}}$ is representable then $\prec$ is representable, too. The converse is also true 
  as the following lemma shows \cite{ales}:
  
  \begin{lemma}\label{Lrepquotient}
  Let $\prec$ be a biorder from $A$ to $X$ and let $\mathrel{\widehat{\prec}} $ be the corresponding quotient biorder from  $A\slash\sim^*$ to $X\slash\sim^{**}$. Assume that $\prec$ admits a  representation $(u,v)$. Then, $\mathrel{\widehat{\prec}} $ is also representable.
  \end{lemma}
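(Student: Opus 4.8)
The plan is to transport the given representation $(u,v)$ of $\prec$ straight down to the quotient by precomposing it with representative-selection maps, so that no new analytic input is required beyond the fact---recorded just before the statement---that $\mathrel{\widehat{\prec}}$ is well defined. Concretely, I would fix sections $r_A\colon A\slash\sim^*\to A$ and $r_X\colon X\slash\sim^{**}\to X$ of the canonical projections (so that $r_A(\widehat a)$ is a chosen element of the class $\widehat a$, and likewise for $r_X$), and then set $\widehat u:=u\circ r_A$ and $\widehat v:=v\circ r_X$. These are genuine real-valued maps on $A\slash\sim^*$ and $X\slash\sim^{**}$, and the claim is that the pair $(\widehat u,\widehat v)$ represents $\mathrel{\widehat{\prec}}$ in the sense of Definition~\ref{lkeste}.

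The verification is then a single chain of equivalences. For arbitrary classes $\widehat a\in A\slash\sim^*$ and $\widehat x\in X\slash\sim^{**}$ one has $\widehat u(\widehat a)<\widehat v(\widehat x)\iff u(r_A(\widehat a))<v(r_X(\widehat x))\iff r_A(\widehat a)\prec r_X(\widehat x)\iff \widehat a\mathrel{\widehat{\prec}}\widehat x$, where the middle step is precisely the hypothesis that $(u,v)$ represents $\prec$, and the last step is the defining property $\widehat a\mathrel{\widehat{\prec}}\widehat x\iff a\prec x$ read off at the representatives $a=r_A(\widehat a)$ and $x=r_X(\widehat x)$. This already yields that $\mathrel{\widehat{\prec}}$ is representable.

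The only point that needs care---and it is the crux rather than a genuine obstacle---is the legitimacy of that last equivalence: a priori $\widehat a\mathrel{\widehat{\prec}}\widehat x$ is declared by testing $a\prec x$ on elements of the classes, so the argument relies on $\mathrel{\widehat{\prec}}$ being independent of the chosen representatives. This is exactly the well-definedness asserted in the excerpt, which rests on $a\sim^* a'$ forcing $a\prec x\iff a'\prec x$ for all $x$, and $x\sim^{**} x'$ forcing $c\prec x\iff c\prec x'$ for all $c$; I would spell this out to make the representative-free reading explicit. Should one prefer to avoid sections altogether, an alternative route is to invoke Theorem~\ref{qwert}(b): representability of $\prec$ produces a countable strictly dense subset $M\subseteq A\cup X$, and one checks that its image $\widehat M$ under the two projections is a countable strictly dense subset of $\mathrel{\widehat{\prec}}$---the translation being immediate from the stated identities $\widehat x\mathrel{\widehat{\prec}}^{**}\widehat y\iff x\prec^{**} y$ and $\widehat a\mathrel{\widehat{\prec}}^{*}\widehat b\iff a\prec^{*} b$---so that Theorem~\ref{qwert}(b) again delivers representability of $\mathrel{\widehat{\prec}}$.
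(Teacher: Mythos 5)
Your proof is correct. Note that the paper itself gives no proof of this lemma---it is stated as a known result imported from the Aleskerov--Bouyssou--Monjardet reference---so there is no internal argument to compare against; your representative-selection proof is the natural way to fill that gap. The chain of equivalences $\widehat u(\widehat a)<\widehat v(\widehat x)\iff u(r_A(\widehat a))<v(r_X(\widehat x))\iff r_A(\widehat a)\prec r_X(\widehat x)\iff \widehat a\mathrel{\widehat{\prec}}\widehat x$ is sound, and you correctly identify that the only delicate point is the representative-independence of $\mathrel{\widehat{\prec}}$, which the paper records (via the trace characterizations of $\sim^*$ and $\sim^{**}$) just before the lemma; choosing sections is also genuinely necessary here, since $u$ need not be constant on $\sim^*$-classes, so one cannot simply factor $u$ through the quotient. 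Your alternative route through Theorem~\ref{qwert}(b) (projecting a countable strictly dense subset and using $\widehat a\mathrel{\widehat{\prec}}^{*}\widehat b\iff a\prec^{*}b$ and $\widehat x\mathrel{\widehat{\prec}}^{**}\widehat y\iff x\prec^{**}y$) also works, but it is heavier than needed: the direct argument uses nothing beyond well-definedness of the quotient biorder.
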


The following definition was introduced by Nakamura in \cite{Naka} and, as it is shown in  Corollary~\ref{reptrazas} (see also \cite{Naka} or Remark~1 in \cite{frontiers}), it is equivalent to the aforementioned order-denseness condition named `strictly dense'.

 \begin{definition} \label{Dnaka} \rm  
 Let $\prec$ be a biorder from $A$ to $X$. A pair of subsets $A^*\subseteq A$ and $X^*\subseteq X$ is said to be
 \emph{jointly dense} for $\prec$
  if for all $a\in A$ and $x\in X$, $a\prec x$ implies the existence of two elements $a^*\in A^*$ and $x^*\in X^*$ such that  
 $a\precsim^* a^*\prec x^*\precsim^{**} x$.
\end{definition}

Next corollary is well-known \cite{ales,frontiers,chateau,Naka}:

\begin{corollary}\label{reptrazas}
Let $\prec$ be a biorder from $A$ to $X$. The following statements are equivalent:
\begin{enumerate}
\item[($i$)] The biorder has a pair of jointly dense and countable  subsets.
\item[($ii$)] The biorder has a countable strictly dense subset.
\item[($iii$)] The biorder is representable.
\item[$(iv)$] The biorder is representable through a pair of functions $(u,v)$ with the additional condition that $u$ represents the trace $\precsim^*$ and $v$  the trace $\precsim^{**}$.
\end{enumerate}
\end{corollary}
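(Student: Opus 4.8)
The plan is to establish all four statements as equivalent through the cycle
$(i)\Rightarrow(ii)$, $(ii)\Leftrightarrow(iii)$, $(iii)\Rightarrow(iv)$, $(iv)\Rightarrow(i)$, taking the equivalence $(ii)\Leftrightarrow(iii)$ verbatim from Theorem~\ref{qwert}(b). The implication $(i)\Rightarrow(ii)$ is short: given countable jointly dense $A^*\subseteq A$ and $X^*\subseteq X$, I would show that $M:=A^*\cup X^*$ is a countable strictly dense subset. If $a\prec x$, joint density gives $a^*\in A^*$ and $x^*\in X^*$ with $a\precsim^* a^*\prec x^*\precsim^{**}x$; applying the characterization $a\precsim^* a^*\iff\{a^*\prec y\Rightarrow a\prec y\}$ from Remark~\ref{rty} with $y=x^*$ yields $a\prec x^*$, so $m:=x^*\in M\cap X$ satisfies $a\prec m\precsim^{**}x$, which is exactly the strict-density condition. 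Thus the only implications carrying real content are $(iii)\Rightarrow(iv)$ and $(iv)\Rightarrow(i)$.

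For $(iii)\Rightarrow(iv)$ I would pass to the quotient biorder $\widehat{\prec}$ on $A/\!\sim^*$ and $X/\!\sim^{**}$. By Lemma~\ref{Lrepquotient}, representability of $\prec$ gives representability of $\widehat{\prec}$; by the properties recalled immediately before this corollary, any representation $(\widehat u,\widehat v)$ of $\widehat{\prec}$ also represents the quotient traces $\widehat{\precsim}^*$ and $\widehat{\precsim}^{**}$, and pulling it back by $u(a)=\widehat u(\widehat a)$, $v(x)=\widehat v(\widehat x)$ produces a representation of $\prec$. Since $\widehat a\,\widehat{\prec}^{*}\,\widehat b\iff a\prec^* b$ (and symmetrically for $\precsim^{**}$) gives $a\precsim^* b\iff\widehat a\,\widehat{\precsim}^{*}\,\widehat b$, the pulled-back functions $u$ and $v$ represent $\precsim^*$ and $\precsim^{**}$, which is precisely $(iv)$.

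The hard implication is $(iv)\Rightarrow(i)$. I would start from a representation $(u,v)$ in which $u$ represents the total preorder $\precsim^*$ and $v$ represents $\precsim^{**}$. Because both traces are representable, Theorem~\ref{qwert}(a) makes them perfectly separable, so I can fix countable order-dense subsets $D_A\subseteq A$, $D_X\subseteq X$ and set $A^*:=D_A$, $X^*:=D_X$. Given $a\prec x$, that is $u(a)<v(x)$, the aim is a two-step interpolation: first locate $a^*\in D_A$ with $a\precsim^* a^*\prec x$ (an insertion on the $A$-side), then $x^*\in D_X$ with $a^*\prec x^*\precsim^{**}x$ (an insertion on the $X$-side); chaining these gives $a\precsim^* a^*\prec x^*\precsim^{**}x$, i.e.\ joint density. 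Whenever some element carries a $u$- or $v$-value strictly inside the open interval $(u(a),v(x))$, each insertion follows directly from perfect separability together with the trace characterizations of Remark~\ref{rty}, which let me upgrade the interpolated value into the required $\prec$, $\precsim^*$, $\precsim^{**}$ relations.

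I expect the main obstacle to be exactly the degenerate case where this interpolation fails, namely a \emph{jump}: when no class of $\precsim^*$ has value strictly between $u(a)$ and $v(x)$, the separability witness need not exist and $a$ itself need not lie in $D_A$. The plan to overcome this is to enlarge $D_A$ and $D_X$ so that they also contain one representative of each endpoint of a jump of $\precsim^*$, respectively $\precsim^{**}$ (together with the extremal classes, when they exist). Since a representable total preorder has only countably many jumps — the associated open intervals of $\mathbb{R}\setminus u(A)$ being pairwise disjoint — the augmented sets remain countable, and in the jump case a stored endpoint plays the role of $a^*$ (resp.\ $x^*$). The remaining bookkeeping is to verify that the two insertions compose, which again reduces to the characterizations $a\precsim^* a^*\iff\{a^*\prec y\Rightarrow a\prec y\}$ and $x^*\precsim^{**}x\iff\{a\prec x^*\Rightarrow a\prec x\}$ of Remark~\ref{rty}, so that $a^*\prec x^*$ indeed entails $a\prec x^*$ and $x^*\precsim^{**}x$.
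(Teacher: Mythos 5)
There is nothing in the paper to compare your proof against: the paper states this corollary as ``well-known'' and defers entirely to \cite{ales,frontiers,chateau,Naka}, supplying only two of the ingredients itself, namely Theorem~\ref{qwert}(b) (your $(ii)\Leftrightarrow(iii)$) and the quotient-biorder discussion around Lemma~\ref{Lrepquotient} (which is precisely your route for $(iii)\Rightarrow(iv)$: pass to $\widehat{\prec}$, use that any representation of the quotient automatically represents the quotient traces, pull back along classes). So your $(i)\Rightarrow(ii)$ and, above all, your $(iv)\Rightarrow(i)$ are genuinely new content relative to the paper, and they are where the real work lies. Your $(i)\Rightarrow(ii)$ is correct as written, your cycle is complete, and your $(iv)\Rightarrow(i)$ is correct in outline: the two-step interpolation (an $A$-side witness $a^*$ for the pair $(a,x)$, then an $X$-side witness for the pair $(a^*,x)$), with perfect separability of the traces covering the generic case and countably many stored representatives covering the degenerate case, does yield jointly dense countable sets, the chaining being exactly the trace characterizations of Remark~\ref{rty}.

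The one point you must nail down is \emph{which} degenerate classes get stored, because your text wavers between two notions of ``jump''. Your displayed definition --- a pair $(a,x)$ with $a\prec x$ such that no class of $\precsim^*$ has $u$-value in $(u(a),v(x))$ --- is the right one, and for it your disjointness argument works: if $\widehat{a}\neq\widehat{a}'$ are two such classes with $u(a)<u(a')$, then $u(a')\notin(u(a),v(x))$ forces $v(x)\leq u(a')$, so witness intervals chosen for distinct classes are pairwise disjoint, hence only countably many classes need a representative (dually on the $X$ side, with gaps avoiding $v(X)$). But the phrase ``a representable total preorder has only countably many jumps'' invokes a different notion (covering pairs of $\precsim^*$ alone), and with that notion the recipe fails. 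Concretely, take disjoint copies of $A=[-1,0]\cup(0.5,1]$ and $X=(-1,2)$ inside $\mathbb{R}$ with $a\prec x\iff a<x$: then $\precsim^*$ is the real order on $A$, which has \emph{no} covering pairs and has extremal classes $-1$ and $1$; yet joint density applied to the pair $(0,\,0.3)$ forces $a^*=0$, since $0\precsim^* a^*$ and $a^*\prec x^*\precsim^{**}0.3$ force $u(a^*)\in[0,\,0.25)$ and $0$ is the only element of $A$ in that range; meanwhile the countable dense set $D_A=(\mathbb{Q}\cap[-1,0))\cup(\mathbb{Q}\cap(0.5,1])$ together with all covering-pair endpoints and extremal classes misses $0$. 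The biorder is representable (so the corollary itself is safe); what fails is the recipe under the covering-pair reading. Once the stored endpoints are those of your pair-based gaps --- which depend jointly on $u(A)$ and $v(X)$, not on $\precsim^*$ alone --- the extremal classes come for free and the argument goes through.
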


\section{A new definition for Distributed System}\label{sdefds}

In the following pages, we  introduce a new definition for the concept of a distributed system with $n$ processes   (the original   definitions of these concepts --an event, a process and a distributed system-- have been introduced in Section~\ref{s1}).
But first,  we redefine the concepts of \emph{causal precedence} and \emph{communication}.

Since in the following pages many relations are going to appear, for the sake of clarity, from now we shall use the symbol $\mathcal{P}$ in order to refer to a  biorder relation, whereas we shall use the symbol $\precsim$ for total preorders (also for the traces associated to a biorder).

\begin{definition}\rm
Let $\{(X_k, \prec_k)\}_{k\in K}$ be a finite family of strict partially ordered  and disjoint sets (i.e., $X_i\cap X_j=\emptyset, \, \forall \, i\neq j$) and $\{\mathcal{P}_{ij}\}_{i\neq j}$ a family of relations from $X_i$ to $X_j$ (for any $i\neq j, \, i,j\in K $). The \emph{causal precedence} corresponding to  $\{\prec_k\}_{k\in K}\cup \{\mathcal{P}_{ij}\}_{i\neq j} $ on $X=\bigcup\limits_{k\in K} X_k$ is the transitive closure of the union $(\bigcup\limits_{k\in K} \prec_k) \bigcup (\bigcup\limits_{i\neq j}\{\mathcal{P}_{ij}\})$. This relation shall be denoted by $\rightarrow$:

$$ \{(\bigcup\limits_{k\in K} \prec_k) \bigcup (\bigcup\limits_{i\neq j}\{\mathcal{P}_{ij}\})\}^+= \rightarrow.$$ 
\end{definition}

\begin{remark}
Notice that, with the definition above, the absence of cycles is not guaranteed, as it is shown the Figure~\ref{Fcycle}. The existence of this cycles implies an error in the computing, known as \emph{deadlock} \cite{dcomp}. 
 
 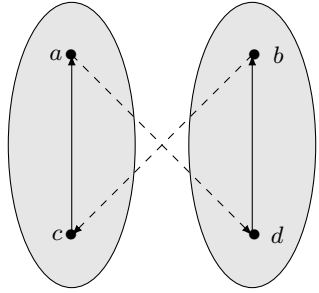
\begin{figure}[htbp]
\begin{center} 
\begin{tikzpicture}[scale=1.2] 
\draw[fill=gray!20!white] (0,1) 
ellipse (20pt and 45pt);

\draw[fill=gray!20!white] (2,1) 
ellipse (20pt and 45pt);

    \draw[-latex] (0,0)--(0,2);
    \draw[-latex] (2,0)--(2,2);
        \draw[dashed, -latex] (0,2)--(2,0);
        \draw[dashed, -latex] (2,2)--(0,0);
\draw (0,2) node[anchor=east] {\small $a$};  
\draw (0,0) node[anchor=east] {\small $c$};  
\draw (2.45,2) node[anchor=east] {\small $b$};  
\draw (2.45,0) node[anchor=east] {\small $d$};  

\draw (0.17,2) node[anchor=east] {$\bullet$};  
\draw (0.17,0) node[anchor=east] {$\bullet$};   
\draw (2.2,2) node[anchor=east] {$\bullet$};  
\draw (2.2,0) node[anchor=east] {$\bullet$};  
\end{tikzpicture}\\
\medskip
$ $
\caption{A cycle in a distributed system: a deadlock}
\label{Fcycle}
\end{center}
\end{figure}

If there is no cycle, then the transitive closure is a strict partial order. In the present paper we will assume that there is no error or  deadlock in the distributed system, that is, we shall assume that the causal precedence is a strict partial order.
\end{remark}

Now, recovering the idea of a distributed system in the spirit of Definition~\ref{Dds}, we first mathematically formalize the idea of \emph{communication} just as a finite relation between to distinct sets satisfying a `bijective' condition.

\begin{definition}\label{Dcomm}\rm
Let $A $ and $ B $ be two disjoint sets.
A \emph{communication from $A$ to $B$} is a finite binary relation $\mathcal{P}\subseteq A\times B$ (so, $|\mathcal{P}|<\infty$) such that for any $(a,b)\in \mathcal{P}$  and $a'\in A, \, b'\in B$ the following \emph{bijective} condition is satisfied:
 $$(a',b)\in \mathcal{P}\Rightarrow  a=a' \quad\text{ as well as }\quad (a,b')\in \mathcal{P}\Rightarrow  b'=b.$$

Here, the elements $a\in A$ such that $(a,b)\in \mathcal{P}$ (for some $b\in B$) are said to be the \emph{senders}, whereas  the elements $b\in B$ such that $(a,b)\in \mathcal{P}$ (for some $a\in A$) are said to be the \emph{receivers}\footnote{We call them senders/receivers in the spirit of Definition~\ref{Dds}.}.
\end{definition}

Now, we focus on communications between ordered sets. 
 
 \begin{definition}\rm
 Let $(A,\precsim_A)$ and $(B,\precsim_B)$ 
 be two disjoint  partially ordered sets. We shall say that a binary relation $\mathcal{P}$ is a \emph{causal biorder} from $A$ to  $B$ if 
  $\mathcal{P} \subseteq A\times B$ and for any $a,c\in A$, $b,d\in B$ it holds that
 $$(a\mathcal{P} b)  \text{ and } (c\mathcal{P} d)\quad \Rightarrow  \quad (a\rightarrow d)  \text{ or } (c\rightarrow b),$$
  where $\rightarrow$ denotes the corresponding causal precedence of $\precsim_A \cup \precsim_B \cup  \mathrel{ \mathcal{P}}$ on $A\cup B$.
 \end{definition}

 \begin{example}\rm
 Let $(A,\precsim_A)$ and $(B,\precsim_B)$ be two partially ordered sets as defined in Figure~\ref{Fcausalbi}. Let $\mathcal{P}$ and  $\mathcal{P}'$ be two relations from $A$ to $B$ defined by:
 $$ \mathcal{P}=\{(a_2, b_1), (a_3,b_3)\}  \quad \text{ and } \quad  \mathcal{P}'=\{ (a_2,b_2), (a_3,b_3)\}. $$
 It is straightforward to see that $\mathcal{P}$ is a causal biorder whereas $\mathcal{P}'$  is not.
  \end{example}
 
 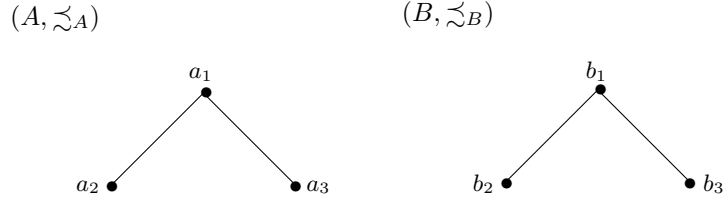
\begin{figure}[htbp]
\begin{center} 
\begin{tikzpicture}[scale=1.25] 
    \draw[] (0,2)--(1,3);
    \draw[] (2,2)--(1,3);
\draw (0,3.8) node[anchor=east] {$(A, \precsim_A)$}; 
\draw (0,2) node[anchor=east] {\small ${a}_2$};  
\draw (2.45,2) node[anchor=east] {\small ${a}_3$};  
\draw (1.2,3.2) node[anchor=east] {\small ${a}_1$};

\draw (0.2,2) node[anchor=east] { $\bullet$};   
\draw (2.15,2) node[anchor=east] { $\bullet$};  
\draw (1.2,3) node[anchor=east] {$\bullet$};
\end{tikzpicture}\qquad
\begin{tikzpicture}[scale=1.25] 
    \draw[] (0,2)--(1,3);
    \draw[] (2,2)--(1,3);
\draw (0,3.8) node[anchor=east] {$(B, \precsim_B)$}; 
\draw (0,2) node[anchor=east] {\small $b_2$};  
\draw (2.45,2) node[anchor=east] {\small $b_3$};  
\draw (1.2,3.2) node[anchor=east] {\small $b_1$};

\draw (0.2,2) node[anchor=east] { $\bullet$};   
\draw (2.15,2) node[anchor=east] { $\bullet$};  
\draw (1.2,3) node[anchor=east] {$\bullet$};
\end{tikzpicture}\\
\medskip
$ $
\caption{Two partially ordered sets, $(A,\precsim_A)$ and $(B,\precsim_B)$. }
\label{Fcausalbi}
\end{center}
\end{figure}

  \begin{definition} \rm
 Let $(A,\precsim_A)$ and $(B,\precsim_B)$ be two disjoint  partially ordered sets and
  $\mathcal{P}$  a communication from $A$ to $B$. 
We define the relation $\overline{\mathcal{P}}$  from $A$ to $B $ by 
 $$a\overline{\mathcal{P}}b \iff a\precsim_A a'\mathcal{P} b' \precsim_B b, \text{ for some } a'\in A, b'\in B.$$
That is,   $\overline{\mathcal{P}}=\precsim_{A}\circ \mathcal{P}\circ \precsim_{B}$.
 \end{definition}


 \begin{proposition}\label{Pbiorder}
 Let $(A,\precsim_A)$ and $(B,\precsim_B)$ 
 be two disjoint  partially ordered sets and $\mathcal{P}$ any relation from $A$ to $B$.
If $(A, \precsim_A)$ or $(B, \precsim_B)$ is a chain, then:

\begin{enumerate}
\item[$(i)$] $\mathcal{P}$ is a causal biorder.
\item[$(ii)$] $\overline{\mathcal{P}}$ is a biorder.
\end{enumerate} 
In particular, any communication  from $A$ to $B$ is a causal biorder.
 \end{proposition}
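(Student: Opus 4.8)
The plan is to handle the two assertions separately, in each case using the totality of the chain to compare a suitable pair of elements and then closing by transitivity. Throughout I would argue the case in which $(A,\precsim_A)$ is the chain; the case in which $(B,\precsim_B)$ is the chain is perfectly analogous, the only asymmetry being that $\mathcal{P}$ is directed from $A$ to $B$, which does not affect the reasoning. I will freely use that the causal precedence $\rightarrow$ is transitive and contains $\prec_A$, $\prec_B$ and $\mathcal{P}$.

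For $(i)$, suppose $a\mathcal{P} b$ and $c\mathcal{P} d$. Since $A$ is totally ordered, either $a\precsim_A c$ or $c\precsim_A a$. In the first case I would show $a\rightarrow d$: if $a=c$ then $a\mathcal{P} d$, whence $a\rightarrow d$ because $\mathcal{P}$ is contained in $\rightarrow$; if $a\neq c$ then $a\prec_A c$, so $a\rightarrow c$, and combining with $c\rightarrow d$ (which holds since $c\mathcal{P} d$) transitivity of $\rightarrow$ yields $a\rightarrow d$. The second case $c\precsim_A a$ gives $c\rightarrow b$ by the symmetric computation. Either alternative is exactly what the causal biorder condition demands. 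The two points requiring care are that $\rightarrow$ contains the strict order $\prec_A$ together with $\mathcal{P}$ and is transitive, and the separate treatment of the degenerate case $a=c$.

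For $(ii)$, I would unfold the definition of $\overline{\mathcal{P}}$: from $a\overline{\mathcal{P}} b$ and $c\overline{\mathcal{P}} d$ one obtains witnesses $a_1,a_2\in A$ and $b_1,b_2\in B$ with $a\precsim_A a_1\mathcal{P} b_1\precsim_B b$ and $c\precsim_A a_2\mathcal{P} b_2\precsim_B d$. Totality of $A$ makes $a_1$ and $a_2$ comparable. If $a_1\precsim_A a_2$, then $a\precsim_A a_1\precsim_A a_2\mathcal{P} b_2\precsim_B d$, so transitivity of $\precsim_A$ gives $a\precsim_A a_2\mathcal{P} b_2\precsim_B d$, i.e. $a\overline{\mathcal{P}} d$; if $a_2\precsim_A a_1$, the symmetric rerouting gives $c\precsim_A a_1\mathcal{P} b_1\precsim_B b$, i.e. $c\overline{\mathcal{P}} b$. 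This is precisely the Ferrers condition for $\overline{\mathcal{P}}$. In the dual case, when $B$ is the chain, one instead compares $b_1$ with $b_2$ and reroutes through transitivity of $\precsim_B$.

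Finally, since a communication from $A$ to $B$ is in particular a relation from $A$ to $B$, the last assertion is an immediate instance of $(i)$. I expect the main obstacle to be bookkeeping rather than substance: keeping straight which witness is reused when rerouting the composition, and not overlooking the equality cases in $(i)$. No order-theoretic input beyond transitivity and the totality of the chain is needed.
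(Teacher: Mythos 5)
Your proof is correct and takes essentially the same route as the paper's: in both parts, totality of the chain is used to compare a suitable pair of elements of $A$, and the conclusion follows by rerouting through transitivity. The only cosmetic differences are that in $(ii)$ you compare the witness senders $a_1,a_2$ whereas the paper compares the original elements $a,c$ directly (both work equally well), and that in $(i)$ you treat the degenerate equality case explicitly, a point the paper glosses over by writing $a\precsim_A x\mathcal{P}y$ directly.
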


\begin{proof}
Let $a,x\in A$ and $b,y\in B$ be elements such that $a\mathcal{P} b$ and $x\mathcal{P} y$. If $(A, \precsim_A)$ is a chain, then $a\precsim_A x$ or $x\precsim_A a$ is satisfied. Hence, it holds that $a\precsim_A x \mathcal{P} y b$ or $x\precsim_A a \mathcal{P} b$, thus, $a \rightarrow b$ or $x\rightarrow b $. So, $\mathcal{P}$ is a causal biorder. We reason dually if $(B,\precsim_B) $ is a chain.

Let $a,b\in A$ and $x,y\in B$ be points  such that $a \overline{\mathcal{P}} x$ and $b \overline{\mathcal{P}} y$. Hence, by definition, there exist $a',b'\in A$ and $x',y'\in X$ such that $a\precsim_A a' \mathcal{P} x' \precsim_B x$ and  $b\precsim_A b' \mathcal{P} y' \precsim_B y$.
If  $(A, \precsim_{A})$ is a chain, we distinguish two cases:
\begin{enumerate}
\item If $a\precsim_A b$, then  $a\precsim_A b' \mathcal{P} y' \precsim_B y$, so $a\overline{\mathcal{P}} y$.

\item If $b\precsim_A a$, then  $b\precsim_A a' \mathcal{P} x' \precsim_B x$, so $b\overline{\mathcal{P}} x$.
\end{enumerate}
Therefore, $\overline{\mathcal{P}} $ is a biorder.
We reason dually in case  $(B, \precsim_{B})$ is a chain.
\end{proof}


\begin{remark}\label{Rcom}
\noindent$(1):$
By   Proposition~\ref{Pbiorder}, it is clear that a communication $\mathcal{P}$ is a causal biorder as well as $\overline{\mathcal{P}}$ is a biorder.
\smallskip

\smallskip
\noindent$(2):$ In order to keep close to the original definition given by Lamport (see Definition~\ref{Dds}), where  sending or receiving a message is an event, in Definition~\ref{Dcomm} is not allowed to send a message from $a$ in $A$ to more than one receiver in $B$. Dually, an element $b$ in  $B$ cannot receive more than one message from $A$. From a mathematical point of view, it would be possible to generalise the idea of communication without restricting it to a finite cardinal, that is, with $|\mathcal{P}|=\infty$. However, in the present paper we shall work just on communications in the sense of Definition~\ref{Dcomm}. 

\end{remark}

 Now we are ready to  propose a definition of a  \emph{distributed system of $n$ sets} from a mathematical and theoretical point of view { (the original  definitions of these concepts --an event, a process and a distributed system-- have been introduced in Section~\ref{s1})}. 

\begin{definition}\label{definitionDS}\rm
Let $\{(X_i,\precsim_i)\}_{i=1}^n$ be a family of disjoint and  totally  ordered sets and   $\mathrel{\mathcal{P}}=\{\mathrel{\mathcal{P}_{ij}}\}_{i\neq j}$ (with $i,j\in \{1,...,n\}$)  be a family of communications from $X_i$ to $X_j$ (with $i\neq j$).  
Each totally  ordered set is said to be a \emph{process}. Each element of the processes  is said to be an \emph{event}.
The pair \mbox{$(\bigcup_{i=1}^n (X_i, \precsim_i),  \mathrel{\mathcal{P}})$} is said to be a
   \emph{distributed system}.

\end{definition}

\begin{remark}
\noindent$(1)$ In the previous definition, as it was in the original definition of L. Lamport (see \cite{Lamport}, in particular page 559 and footnote~2), the messages may be received out of order (that is, without satisfying the causal ordering of messages). Furthermore, with this definition there may be cycles with respect to the causal relation (see Figure~\ref{Fcycle}).
\smallskip


\noindent$(2)$ Notice that a  communication may be empty, so that there is no sending of messages in one direction between two processes. In this case we shall denote $\mathrel{\mathcal{P}_{ij}}=\{\emptyset\}$.

\smallskip

\noindent$(3)$ It can be proved that each total order $\precsim_i$ in $X_i$ refines the traces $\precsim_{ij}^*$ and $\precsim_{ji}^{**}$ related to the biorders $\overline{\mathcal{P}}_{ij}$ and $\overline{\mathcal{P}}_{ji}$ (respectively), for any $j\neq i$. Actually, this property was used in \cite{dsjmp} in order to define the concept of a distributed system of two processes. However, the use of the \emph{communication} concept in order to define a distributed system is closer to reality, since it captures the idea of sending and receiving messages. Moreover, it derives in the new term of \emph{causal biorder}, which seems interesting when dealing with a set endowed with more than one relation. Hence, Definition~\ref{definitionDS} has been written by means of communications.

\end{remark}

\begin{proposition}\label{Pbaliokide2}
Let $(A,\precsim_A)$ and $(B,\precsim_B)$  be  two disjoint  totally ordered sets and  $\mathcal{P}_1$ and $\mathcal{P}_2$ two communications from  $A $ to  $ B $. Let \mbox{$((A,\precsim_A)\cup (B,\precsim_B), \mathcal{P}_1)$} and   $((A,\precsim_A)\cup$ $ (B,\precsim_B), \mathcal{P}_2)$ be the corresponding distributed systems and $\rightarrow_1$ and $\rightarrow_2$ their causal relations, respectively. If the causal relations concur, i.e.,  $\rightarrow_1=\rightarrow_2$, then the communications are also the same (i.e., $\mathcal{P}_1=\mathcal{P}_2$) or  the causal ordering of messages is not satisfied.
\end{proposition}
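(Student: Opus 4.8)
The plan is to reduce the statement to a uniqueness (injectivity) fact about the passage $\mathcal{P}\mapsto\overline{\mathcal{P}}$ and then to invert this passage explicitly whenever the causal ordering of messages holds.

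First I would analyse the shape of $\rightarrow_i$. Because $\mathcal{P}_i\subseteq A\times B$ while $\prec_A$ and $\prec_B$ stay inside their respective chains, any $\rightarrow_i$-path leading from a point of $A$ to a point of $B$ must contain exactly one $\mathcal{P}_i$-step --- preceded only by $\prec_A$-steps and followed only by $\prec_B$-steps --- since once in $B$ there is no way back to $A$. Consequently $\rightarrow_i$ restricted to $A\times B$ is exactly $\overline{\mathcal{P}_i}=\precsim_A\circ\mathcal{P}_i\circ\precsim_B$, whereas its restrictions to $A\times A$ and $B\times B$ equal the fixed orders $\prec_A,\prec_B$ and $B\times A$ is empty. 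Thus $\rightarrow_1=\rightarrow_2$ forces $\overline{\mathcal{P}_1}=\overline{\mathcal{P}_2}$, and the proposition reduces to the following: if $\overline{\mathcal{P}_1}=\overline{\mathcal{P}_2}$ and both $\mathcal{P}_1,\mathcal{P}_2$ satisfy the causal ordering of messages, then $\mathcal{P}_1=\mathcal{P}_2$ (the complementary case being precisely that the causal ordering of messages is not satisfied).

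Next I would invert $\overline{\mathcal{P}_i}$. Under the causal ordering of messages a communication is an order-preserving finite matching: writing its senders as $s_1\prec_A\cdots\prec_A s_m$ and its receivers as $r_1\prec_B\cdots\prec_B r_m$, one has $\mathcal{P}_i=\{(s_k,r_k):1\le k\le m\}$. The computational heart is to show that for each $a\in A$ the section $U_i(a)=\{b\in B:a\,\overline{\mathcal{P}_i}\,b\}$ is either empty or an up-set of $B$ whose $\precsim_B$-minimum is the receiver matched to the least sender that is $\succsim_A a$. This yields two recovery formulas written purely in terms of $\overline{\mathcal{P}_i}$ and the chain orders: the receiver set is $R=\{\,b\in B: b=\min U_i(a)\text{ for some }a\in A\,\}$, and the sender matched to $r\in R$ is $\max\{\,a\in A:a\,\overline{\mathcal{P}_i}\,r\,\}$. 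A short verification confirms that these return $\mathcal{P}_i$: indeed $\{a:a\,\overline{\mathcal{P}_i}\,r_k\}=\{a:a\precsim_A s_k\}$, whose maximum is $s_k$, and the minima of the nonempty sections range exactly over $\{r_1,\dots,r_m\}$. Since both formulas use only $\overline{\mathcal{P}_i},\precsim_A,\precsim_B$, and these coincide for $i=1,2$, the reconstructions agree and $\mathcal{P}_1=\mathcal{P}_2$.

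The points requiring care are twofold. First, the maxima and minima above must be shown to exist even for infinite chains $A,B$; this is not an order-completeness issue, because each extremum is itself a sender or a receiver, i.e.\ a genuine element attained in the set, so existence is automatic. Second --- and this is where the hypothesis is indispensable --- the formula $\max\{a:a\,\overline{\mathcal{P}_i}\,r\}$ returns the true partner of $r$ only because causal ordering makes the matching order-preserving, forcing $\{a:a\,\overline{\mathcal{P}_i}\,r_k\}$ to be the down-set of the single sender $s_k$. I expect this to be the subtle step, and I would underline its necessity by exhibiting the order-reversing matching on two-element chains $A=\{a_1\prec_A a_2\}$, $B=\{b_1\prec_B b_2\}$: the communications $\{(a_2,b_1)\}$ and $\{(a_2,b_1),(a_1,b_2)\}$ share the closure $A\times B$, hence induce the same causal relation, yet differ --- which both shows that injectivity genuinely fails without the hypothesis and explains why the alternative ``the causal ordering of messages is not satisfied'' cannot be dropped.
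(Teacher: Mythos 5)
Your proposal is correct, but it follows a genuinely different route from the paper's own proof. The two arguments share only the opening observation: since no edge of the system leads from $B$ back to $A$, the restriction of $\rightarrow_i$ to $A\times B$ is exactly $\overline{\mathcal{P}}_i=\precsim_A\circ\,\mathcal{P}_i\circ\precsim_B$ (the paper uses this implicitly when it passes from $a\rightarrow_2 b$ to the existence of $a_1,b_1$ with $a\precsim_A a_1\mathcal{P}_2 b_1\precsim_B b$), so that $\rightarrow_1=\rightarrow_2$ amounts to $\overline{\mathcal{P}}_1=\overline{\mathcal{P}}_2$. From there the paper argues pointwise and by contradiction: it takes $(a,b)\in\mathcal{P}_1$, deduces $a\overline{\mathcal{P}}_2 b$, assumes $(a,b)\notin\mathcal{P}_2$, and bounces back through $\rightarrow_2=\rightarrow_1$ to obtain $a_3\mathcal{P}_1 b_3$ with $a\prec_A a_3$ and $b_3\precsim_B b$; the case $b_3=b$ contradicts the bijectivity clause in the definition of a communication, while the case $b_3\prec_B b$ exhibits a violation of the causal ordering of messages in the system carrying $\mathcal{P}_1$. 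You instead prove a stronger structural fact: on the class of communications satisfying the causal ordering of messages (equivalently, order-preserving finite matchings, which is where you invoke the bijectivity clause), the map $\mathcal{P}\mapsto\overline{\mathcal{P}}$ is injective, with an explicit inverse given by your $\min$/$\max$ formulas, and the proposition follows by contraposition. The paper's argument buys brevity and a sharper localisation of the failure (it identifies in which of the two systems causal ordering breaks down, namely the one containing the message the other lacks); yours buys a reconstruction procedure --- the communication is recoverable from the causal relation alone whenever causal ordering holds --- which is of independent interest for the uniqueness questions raised in Remark~\ref{Rnetwork}, and your two-element counterexample cleanly certifies that the disjunct ``the causal ordering of messages is not satisfied'' cannot be dropped, a point the statement encodes but the paper's proof does not illustrate.
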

\begin{proof}
Let $a\in A$ and $b\in B$ be such that $a\mathcal{P}_1 b$. Then, $a\rightarrow_1 b$, that means $a\rightarrow_2 b$, thus, there exist $a_1,b_1$ such that $a\precsim_A a_1\mathcal{P}_2 b_1\precsim_B b$. Thus, $a\mathcal{P}_1 b$ implies $a\overline{\mathcal{P}}_2 b$.

Suppose now that  $a\overline{\mathcal{P}}_2 b$ but $\neg( a \mathcal{P}_2 b)$. Then, there must exist $a_2\in A, \,b_2\in B$ such that  $a\prec_A a_2\mathcal{P}_2 b_2\precsim_B b$ or  $a\precsim_A a_2\mathcal{P}_2 b_2\prec_B b$. 
Assume that  $a\prec_A a_2\mathcal{P}_2 b_2\precsim_B b$ is satisfied (we reason analogously for the  dual case), then it holds that $a_2\rightarrow_2 b$ with $a\prec_A a_2$, that is, $a_2\rightarrow_1 b$ with $a\prec_A a_2$. So, $a_2\overline{\mathcal{P}}_1 b$, thus, $a\prec a_2\precsim_A a_3\mathcal{P}_1 b_3\precsim_B b$ for some $a_3\in A$, $b_3\in B$.

Here we distinguish two cases. If $b_3=b$, then $\mathcal{P}_1$ fails to be a communication since we have that $a_3\mathcal{P}_1 b$ as well as $a\mathcal{P}_1 b$, with $a\neq a_3$. If $b_3\prec b$, then the causal ordering of messages is not satisfied, since we have that $a_3\mathcal{P}_1 b_3$ and  $a \mathcal{P}_1 b $ as well as $a\prec_A a_2$ and $b_3\prec_B b$ (see Figure~\ref{Fcausalorderm}). This concludes the proof.
\end{proof}


\medskip

Now, we introduce the concept of \emph{line communication}. 

\begin{definition}\rm
Let $(\bigcup\limits_{i=1}^n (X_i, \precsim_i),  \mathrel{\mathcal{P}})$ be a distributed system, where   $\mathrel{\mathcal{P}}=\{\mathrel{\mathcal{P}_{ij}}\}_{i\neq j}$ (with $i,j\in \{1,...,n\}$)  is the family of communications from $X_i$ to $X_j$ (with $i\neq j$). 
It is said that  $\mathrel{\mathcal{P}}$ is a \emph{line communication} if $\mathcal{P}_{ij}=\{\emptyset\}$ for any $j\neq i+1$, for each $i=1,...,n-1$.
\end{definition}
Hence, when the processes are endowed with a line communication, these computers or processes are ordered in a sequence (so, totally ordered) such that each computer only sends messages to the next one (see Figure~\ref{ds3p}).

\begin{center}
\begin{figure}
\begin{center}

\begin{tikzpicture}[scale=0.55]
\begin{scope}[very thick]

\begin{scope}[very thick]

\draw[fill=gray!20!white] (-3,0) 
ellipse (48pt and 90pt);

\draw[fill=gray!40!white] (2.5,0) 
ellipse (48pt and 90pt);

\draw[fill=gray!40!white] (8,0) 
ellipse (48pt and 90pt);

\end{scope}
\end{scope}
\draw[red, -latex] (-3,0) -- (2.5,1);
\draw[red, -latex] (-3,1.2) -- (2.5,1.7);
\draw[red, -latex] (-3,-2) -- (2.5,0);
\draw[blue, -latex] (2.5,-0.6) -- (8,0.5);
\draw[blue, -latex] (2.5,1) -- (8,1.3);
\draw[blue, -latex] (2.5,-2) -- (8,-1);

\draw[green, -latex] (-3,-2.7) -- (-3,2);
\draw[green, -latex] (2.5,-2.7) -- (2.5,2);
\draw[green, -latex] (8,-2.7) -- (8,2);
\draw (-1.75,2.3) node[anchor=east] {  \small{$(X_1, \precsim_1)$}} ;
\draw (3.85,2.3) node[anchor=east] {  \small{$(X_2, \precsim_2)$}} ;
\draw (9.35,2.3) node[anchor=east] {  \small{$(X_3, \precsim_3)$}} ;
\end{tikzpicture}\end{center}
\medskip
$ $
\begin{center}
\caption{A distributed system of three processes with line communication.}\end{center}
\label{ds3p}
\end{figure}
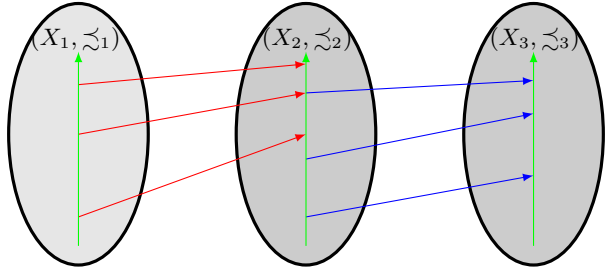
\end{center}

 
\begin{proposition}
Let 
$(\bigcup\limits_{i=1}^n (X_i, \precsim_i),  \mathrel{\mathcal{P}}=\bigcup\limits_{i\neq j} \mathrel{\mathcal{P}_{ij}}  )$ 
and 
$(\bigcup\limits_{i=1}^n (X_i, \precsim_i),  \mathrel{\mathcal{P}'}=\bigcup\limits_{i\neq j} \mathrel{\mathcal{P}'_{ij}}  )$ 
  be two distributed systems with the same processes and both with line communication. Assume  that the causal ordering of messages is  always satisfied as well as there is no cycles (that is, the $\rightarrow$ is a strict partial order).
Then, the corresponding causal precedences coincide ($\rightarrow_1=\rightarrow_2$) if and only if $\mathrel{\mathcal{P}}_{ij}=\mathrel{\mathcal{P}'}_{ij}$  for each $i\neq j$, that is, if and only if they have the same communications. 
\end{proposition}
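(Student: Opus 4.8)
The plan is to prove the two implications separately, the reverse one being immediate and the forward one requiring a reduction to adjacent processes so that Proposition~\ref{Pbaliokide2} can be invoked. For the implication ``same communications $\Rightarrow$ same causal precedence'', I would simply note that the processes $(X_i,\precsim_i)$ are shared, so if $\mathcal{P}_{ij}=\mathcal{P}'_{ij}$ for every $i\neq j$ then the two base relations $(\bigcup_k \prec_k)\cup(\bigcup_{i\neq j}\mathcal{P}_{ij})$ and $(\bigcup_k \prec_k)\cup(\bigcup_{i\neq j}\mathcal{P}'_{ij})$ literally coincide; taking transitive closures yields $\rightarrow_1=\rightarrow_2$.

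The substance lies in the converse. Assume $\rightarrow_1=\rightarrow_2$. For indices with $j\neq i+1$ the line-communication hypothesis forces $\mathcal{P}_{ij}=\mathcal{P}'_{ij}=\{\emptyset\}$, so it remains to treat the adjacent pairs $\mathcal{P}_{i,i+1}$ and $\mathcal{P}'_{i,i+1}$ for $i=1,\dots,n-1$. The key reduction I would establish is that, for $a\in X_i$ and $b\in X_{i+1}$, one has $a\rightarrow b$ if and only if $a\,\overline{\mathcal{P}}_{i,i+1}\,b$; in particular the restriction of $\rightarrow$ to $X_i\times X_{i+1}$ sees only the single communication $\mathcal{P}_{i,i+1}$ and coincides with the causal precedence of the two-process distributed system $((X_i,\precsim_i)\cup(X_{i+1},\precsim_{i+1}),\mathcal{P}_{i,i+1})$ on that product.

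To prove this reduction---which I expect to be the main obstacle---I would track the process index along any path witnessing $a\rightarrow b$ in the transitive closure of the base relation. Since the communications form a line communication, every cross edge goes from some $X_k$ to $X_{k+1}$, while $\prec_k$-edges stay inside $X_k$; hence the process index is nondecreasing along the path and increases by exactly one at each cross edge. A path from index $i$ to index $i+1$ can therefore never reach index $i+2$ (there would be no way back), so it uses exactly one cross edge, necessarily an edge of $\mathcal{P}_{i,i+1}$, and splits as a $\precsim_i$-segment inside $X_i$, that single crossing, and a $\precsim_{i+1}$-segment inside $X_{i+1}$. This is precisely $a\precsim_i a'\,\mathcal{P}_{i,i+1}\,b'\precsim_{i+1} b$, i.e. $a\,\overline{\mathcal{P}}_{i,i+1}\,b$; the reverse inclusion is clear since such a chain of relations lies in the base relation.

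Granting the reduction, the equality $\rightarrow_1=\rightarrow_2$ restricts on $X_i\times X_{i+1}$ to the equality of the two corresponding two-process causal precedences (their within-process parts agree automatically, as the chains are shared), so these causal relations coincide as relations on $X_i\cup X_{i+1}$. Because the causal ordering of messages is assumed to hold throughout, Proposition~\ref{Pbaliokide2} then yields $\mathcal{P}_{i,i+1}=\mathcal{P}'_{i,i+1}$. Running this argument over all $i=1,\dots,n-1$ and combining with the empty cases $j\neq i+1$ gives $\mathcal{P}_{ij}=\mathcal{P}'_{ij}$ for every $i\neq j$, which completes the proof.
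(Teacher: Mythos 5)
Your proof is correct and follows essentially the same route as the paper: reduce to adjacent pairs $X_i\cup X_{i+1}$, identify the restriction of $\rightarrow$ there with the causal precedence of the two-process system $((X_i,\precsim_i)\cup(X_{i+1},\precsim_{i+1}),\mathcal{P}_{i\,i+1})$, and invoke Proposition~\ref{Pbaliokide2} together with the causal-ordering hypothesis. In fact you go further than the paper, which merely asserts the restriction identity $\rightarrow_{1|_{X_i\cup X_{i+1}}}=(\precsim_i\cup\precsim_{i+1}\cup\mathcal{P}_{i\,i+1})^+$, whereas your index-tracking argument (cross edges increase the process index by exactly one, so a path from $X_i$ to $X_{i+1}$ uses exactly one such edge) actually proves it.
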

\begin{proof}
\noindent$\Rightarrow:$ If the corresponding causal precendeces coincide, then they also coincide when we restrict the relation to a subset $X_i\cup X_{i+1}$ (for any $i=1,...,n-1$). Moreover, since $ \mathrel{\mathcal{P}}$ is a line communication, it holds that $\rightarrow_{1{|_{X_i\cup X_{i+1}}}}=(\precsim_i \cup \precsim_{i+1}\cup \mathrel{\mathcal{P}}_{i\, i+1})^+$, that is, the restriction of the causal relation $\rightarrow_1$ to $X_i\cup X_{i\, i+1}$ is just the causal relation of the distributed system $((X_i,\precsim_i)\cup (X_{i+1},\precsim_{i+1}), \mathcal{P}_{i\, i+1})$. Dually, it holds that \mbox{$\rightarrow_{2{|_{X_i\cup X_{i+1}}}}$}$=(\precsim_i \cup \precsim_{i+1}\cup \mathrel{\mathcal{P}}_{i\, i+1}')^+$.
Therefore, 
by Proposition~\ref{Pbaliokide2}, the communications $\mathrel{\mathcal{P}}_{i\,i+1}$ and $\mathrel{\mathcal{P}'}_{i\, i+1}$ 
 also coincide, and that for any $i=1,...,n-1$.


\noindent$\Leftarrow:$ This implication is trivial.
\end{proof}

\begin{remark}\label{Rnotunique}
Notice that the descomposition of a partially ordered set in $n$ disjoint chains is not unique. Therefore,   it may be possible to construct distinct distributed systems such that the corresponding causal relation coincides with the  initial partial order. 
In order to show that we include the following   example:

\end{remark}

\begin{example}\label{Enotunique}\rm
Let $\precsim$ be a partial order defined on $X=\{a,b,c,d\}$ by $\{ c\precsim b\precsim a, b\precsim d \}$.
Then, the partial order can be characterized by means of the following distributed systems (among others) of 2 processes (see Figure~\ref{Fnotunique}):

\noindent$(1)$
$(X_1,\precsim_1)= (\{ a,b,c\}, c\prec_1 b\prec_1 a) $ and $  (X_2, \precsim_2) =(\{d\}, \{\emptyset\} )$, with communication $\mathcal{P}_{12}=\{(b, d)\}$.

\noindent$(2)$
$(X_1,\precsim_1)= (\{ a,c\}, c\prec_1  a) $ and $ (X_2, \precsim_2) =(\{b,d\}, b\prec_2 d)$, with communications $\mathcal{P}_{12}=\{(c, b)\}$ and $\mathcal{P}_{21}=\{(b, a)\}$.

 \begin{figure}[htbp]
\begin{center} 
\begin{tikzpicture}[scale=1.2] 
\draw[fill=gray!20!white] (0,1) 
ellipse (20pt and 45pt);

\draw[fill=gray!20!white] (2,1) 
ellipse (20pt and 45pt);

  \draw[] (0,0)--(0,1);
    \draw[] (0,1)--(0,2);
        \draw[dashed, -latex] (0,1)--(2,2);
\draw (-0.1,2) node[anchor=east] {\small $a$};  
\draw (-0.1,1) node[anchor=east] {\small $b$};  
\draw (-0.1,0) node[anchor=east] {\small $c$};  
\draw (2.45,2) node[anchor=east] {\small $d$};  

\draw (0.17,2) node[anchor=east] {$\bullet$};  
\draw (0.17,1) node[anchor=east] {$\bullet$};   
\draw (0.17,0) node[anchor=east] {$\bullet$};  
\draw (2.2,2) node[anchor=east] {$\bullet$};  
\end{tikzpicture}\qquad\qquad
\begin{tikzpicture}[scale=1.2] 
\draw[fill=gray!20!white] (0,1) 
ellipse (20pt and 45pt);

\draw[fill=gray!20!white] (2,1) 
ellipse (20pt and 45pt);

    \draw[] (0,0)--(0,2);
    \draw[] (2,1)--(2,2);
        \draw[dashed, -latex] (2,1)--(0,2);
        \draw[dashed, -latex] (0,0)--(2,1);
\draw (0,2) node[anchor=east] {\small $a$};  
\draw (0,0) node[anchor=east] {\small $c$};  
\draw (2.45,2) node[anchor=east] {\small $d$};  
\draw (2.45,1) node[anchor=east] {\small $b$};  

\draw (0.17,2) node[anchor=east] {$\bullet$};  
\draw (0.17,0) node[anchor=east] {$\bullet$};   
\draw (2.2,2) node[anchor=east] {$\bullet$};  
\draw (2.2,1) node[anchor=east] {$\bullet$};  
\end{tikzpicture}\\
\medskip
$ $
\caption{A partial order represented through two distinct distributed systems.}
\label{Fnotunique}
\end{center}
\end{figure}
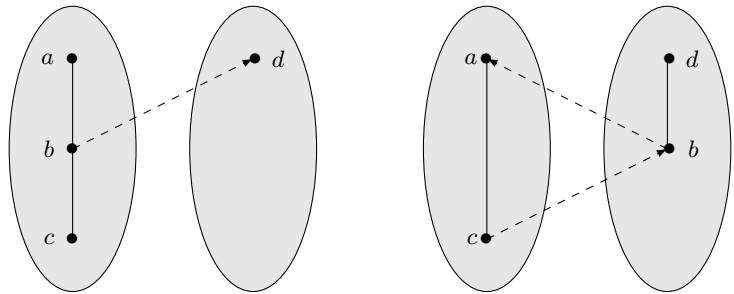

\end{example}

\begin{remark}\label{Rnetwork}

Hence, given a preorder, it seems interesting to study the existence and uniqueness of distributed systems that characterize (through its causal precedence)  the preorder with some additional properties such as that the length of the processes is minimal, or that the number of messages is minimal, etc.  For instance, in the example of Figure~\ref{Fnotunique}, in the first case the length of the longest process is three and the number of messages is one, whereas in the second case these values are two and two, respectively. 

\end{remark}

\section{Representability of Distributed Systems}\label{srepds}

 Since it is possible to add a new process  to a distributed system (that is, connecting another computer to the system, including also the corresponding communication), it is interesting to study how to create a new representation of the distributed system arised from the union of two distributed systems, but now aggregating the representations before. 

In this paper we do not achieve the answer to this question but, at least, we are able to construct weak representations of distributed systems with line communications starting from pairs of functions that represent each biorder. 

We shall assume that the causal ordering of messages is satisfied, as well as there are no cycles. Thus, we assume that our distributed systems are such that the causal relation $ \rightarrow $ is a strict partial order.

\medskip

The following  definitions introduce the concept of representability for a distributed system of $n$ processes.

\begin{definition}\label{weakly}\rm
Let 
$(\bigcup_{i=1}^n (X_i, \precsim_i),  \mathrel{\mathcal{P}}=\bigcup_{i\neq j} \mathrel{\mathcal{P}_{ij}}  )$ 
  be  a distributed system. We say that it is \emph{weakly representable} if there exists a family $\{u_i\}_{i=1}^n$ (called \emph{weak representation}) of real functions $u_i\colon X_i\to \mathbb{R}$
such that $(u_i,u_j)$ weakly represents the biorder $\mathrel{\overline{\mathcal{P}}_{ij}}$ with respect to $<$ (that is, $x_i \mathrel{\overline{\mathcal{P}}_{ij}} x_j \Rightarrow  u_{i}(x_i)<u_{j}(x_j)$, for any $x_i\in X_i, \, x_j\in X_j$)  as well as each $u_i$  represents the total order $\precsim_i$ (i.e., $x\precsim_i y \iff u_i(x)\leq u_i(y), \, x,y\in X_i$), for any $i,j\in \{1,...,n\} $ and $i\neq j$.

If each set $X_i$ is endowed with a topology $\tau_i$, then we will say that the distributed system is \emph{continuously weakly  representable} if there exists a continuous weak representation.
\end{definition}
\begin{remark}
\noindent $(1)$
Notice that, for any $x\in X_i$ and $y\in X_j$ such that $x\rightarrow y$, it holds that $u_i(x)<u_j(y)$, for any $i,j\in \{1,2,...,n\}$.

\smallskip
\noindent $(2)$
The aforementioned functions $u_i$ are known as \emph{Lamport clocks} (see \cite{Lamport}). In fact, a Lamport  clock is a function $\mathcal{C}$ satisfying that $a\rightarrow b$ implies $\mathcal{C}(a)<\mathcal{C}(b)$, for any $a,b\in X$. On the other hand,  $\mathcal{C}(a)<\mathcal{C}(b)$ does not imply  $a\rightarrow b$.
\end{remark}

\begin{definition}\label{representable}\rm
Let 
$(\bigcup_{i=1}^n (X_i, \precsim_i),  \mathrel{\mathcal{P}}=\bigcup_{i\neq j} \mathrel{\mathcal{P}_{ij}}  )$ 
  be  a distributed system. We say that it is \emph{(finitely) representable} if there exists a (finite) family of weak representations $\{ \{u_{i}^k\}_{i=1}^n\}_{k\in \mathcal{K}}$ 
such that 
 $x_i \mathrel{\overline{\mathcal{P}}_{ij}} x_j $ iff $ u^k_{i}(x_i)<u_{j}^k(x_j)$ for any $k\in \mathcal{K}$, for any $x_i\in X_i, \, x_j\in X_j$ and $i\neq j$.

If each set $X_i$ is endowed with a topology $\tau_i$, then we will say that the distributed system is \emph{continuously  representable} if there exists a continuous  representation.
\end{definition}

As following Preposition~\ref{Pduality} shows, the term before is analogous 
 to the Richter-Peleg multi-utility representation used for preorders. \cite{partial,BH,EvO,Peleg,Richter}

\begin{proposition}\label{Pduality}
A distributed system 
$(\bigcup_{i=1}^n (X_i, \precsim_i),  \mathrel{\mathcal{P}}=\bigcup_{i\neq j} \mathrel{\mathcal{P}_{ij}}  )$  is (finitely) representable if and only if the corresponding  causal relation  $\rightarrow$   is (finitely) Richter-Peleg multi-utility representable.
\end{proposition}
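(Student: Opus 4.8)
The plan is to set up a dictionary between weak representations of the distributed system and order-preserving real functions (the ``Lamport clocks'' of the Remark following Definition~\ref{weakly}) for the causal precedence $\rightarrow$, and then to lift this correspondence from single functions to finite families, observing that the defining condition of a (finite) representation of the system is exactly the defining condition of a (finite) Richter-Peleg multi-utility of $\rightarrow$. Throughout I write $\preceq$ for the reflexive closure of $\rightarrow$, which is a partial order under the standing no-cycles assumption, so that being Richter-Peleg multi-utility representable refers to this $\preceq$ in the sense of Definition~\ref{Dmulti}.

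First I would prove the basic bridge: a family $\{u_i\}_{i=1}^n$ is a weak representation if and only if the glued map $U\colon X\to\mathbb{R}$, defined by $U|_{X_i}=u_i$, satisfies $x\rightarrow y\Rightarrow U(x)<U(y)$, i.e. $U$ is order-preserving for $\rightarrow$ and isotonic for $\preceq$. The forward implication is essentially the cited Remark: since $\rightarrow$ is the transitive closure of $\bigcup_k\prec_k\cup\bigcup_{i\ne j}\mathcal{P}_{ij}$, any $x\rightarrow y$ is witnessed by a finite chain of generators, and I would induct on its length, using that each $u_i$ represents the total order $\precsim_i$ to handle within-process steps and that $(u_i,u_j)$ weakly represents $\overline{\mathcal{P}}_{ij}\supseteq\mathcal{P}_{ij}$ to handle communication steps. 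For the converse, given such a $U$ I restrict to $u_i=U|_{X_i}$; the totality of each $\precsim_i$ supplies the nontrivial direction $u_i(x)\le u_i(y)\Rightarrow x\precsim_i y$, and $a\overline{\mathcal{P}}_{ij}b$ unfolds, via $\mathcal{P}_{ij}\subseteq\rightarrow$ together with $\precsim_i,\precsim_j\subseteq\preceq$, into $a\rightarrow b$, which forces $u_i(a)<u_j(b)$.

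Next I would lift to families. Gluing a finite family of weak representations $\{\{u_i^k\}_i\}_{k\in\mathcal{K}}$ into $\{U^k\}_{k\in\mathcal{K}}$, the bridge makes each $U^k$ order-preserving for $\rightarrow$, which is precisely the Richter-Peleg requirement of Definition~\ref{Dmulti}. It then remains to match two global conditions: that $x\overline{\mathcal{P}}_{ij}y\Leftrightarrow\forall k\,(u_i^k(x)<u_j^k(y))$ for all cross-process pairs, on the one hand, and that $x\preceq y\Leftrightarrow\forall k\,(U^k(x)\le U^k(y))$ for all pairs, on the other. Within a single process the latter reduces to the fact that any one $u_i^k$ already represents the total order $\precsim_i$, so I would concentrate on cross-process pairs $x\in X_i$, $y\in X_j$ with $i\ne j$, where the equivalence is read off from the representation of the biorder $\overline{\mathcal{P}}_{ij}$ together with Corollary~\ref{reptrazas}(iv), which lets me assume each $u_i^k$ represents the trace $\precsim^*$ and each $u_j^k$ the trace $\precsim^{**}$.

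I expect the main obstacle to be exactly this cross-process matching: reconciling the strict pairwise inequalities that define a representation of a biorder with the non-strict inequalities that define a multi-utility, and, more seriously, accounting for the fact that $\rightarrow$ may relate $x$ and $y$ through intermediate processes even when the single biorder $\overline{\mathcal{P}}_{ij}$ does not. The resolution I would pursue is to exploit that every $U^k$ is already a Lamport clock, so that $\le$ automatically sharpens to $<$ along the strict part $\rightarrow$, and to use the transitive-closure description of $\rightarrow$ to reduce any multi-hop relation to a composition of communication and within-process steps already controlled by the family; conversely, the ``only if'' clause on each $\overline{\mathcal{P}}_{ij}$ is what detects incomparable cross-process pairs and yields, for such a pair, some index $k$ with $U^k(x)\not<U^k(y)$. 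Assembling the within-process and cross-process matchings then gives the claimed equivalence in both directions.
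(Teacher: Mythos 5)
Your overall strategy is the same as the paper's own (extremely terse) proof: glue the $u_i^k$ into global maps $U^k$ for one implication and restrict a multi-utility $\{w_k\}$ to the processes for the other. Your bridge lemma (weak representations correspond to Lamport clocks, via totality of each $\precsim_i$ and acyclicity) and the within-process matching are correct, and are more careful than anything the paper writes down. The genuine gap is exactly the cross-process matching that you yourself flag as ``the main obstacle'': the resolution you sketch does not close it, and in fact it cannot be closed. The transitive-closure argument shows that a composite pair $x\,\overline{\mathcal{P}}_{il}\,z\,\overline{\mathcal{P}}_{lj}\,y$ lies in $\rightarrow$, but nothing places it in the \emph{direct} biorder $\overline{\mathcal{P}}_{ij}$, whereas Definition~\ref{representable} demands the equivalence $x\,\overline{\mathcal{P}}_{ij}\,y \iff \forall k\,(u_i^k(x)<u_j^k(y))$ for $\overline{\mathcal{P}}_{ij}$ itself, not for the cross-process restriction of $\rightarrow$.

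Concretely, take $n=3$ with singleton processes $X_1=\{a\}$, $X_2=\{b\}$, $X_3=\{c\}$, communications $\mathcal{P}_{12}=\{(a,b)\}$, $\mathcal{P}_{23}=\{(b,c)\}$, and all other $\mathcal{P}_{ij}$ empty. Then $\rightarrow$ is the chain $a\rightarrow b\rightarrow c$, $a\rightarrow c$, and it admits a finite (indeed one-element) Richter-Peleg multi-utility, namely $w(a)=0$, $w(b)=1$, $w(c)=2$; but this distributed system is \emph{not} representable: every weak representation forces $u_1^k(a)<u_2^k(b)<u_3^k(c)$, hence $u_1^k(a)<u_3^k(c)$ for every $k$, while $\overline{\mathcal{P}}_{13}=\emptyset$ requires some $k$ with $u_1^k(a)\geq u_3^k(c)$. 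So the ``multi-utility $\Rightarrow$ representable'' direction fails for $n\geq 3$ no matter how the restriction argument is organized. There is a second leak in your gluing direction under your (non-strict) reading of $\preceq$: for an incomparable cross-process pair, Definition~\ref{representable} only supplies some $k$ with $u_i^k(x)\geq u_j^k(y)$, and a tie is compatible with $\forall k\,(U^k(x)\leq U^k(y))$, violating the multi-utility equivalence (two singleton processes with no communication and $u_1^1(a)=u_2^1(b)=0$ realize this). You should be aware that the paper's two-line proof glues and restricts exactly as you propose and silently skips both points; your instinct about the obstacle was right, and a correct statement needs either $n=2$ (where acyclicity makes every cross-process causal pair lie in $\overline{\mathcal{P}}_{12}\cup\overline{\mathcal{P}}_{21}$), or the extra hypothesis that cross-process causality coincides with $\bigcup_{i\neq j}\overline{\mathcal{P}}_{ij}$, or a reformulated notion of representability; reading Richter-Peleg multi-utility as the strict equivalence $x\rightarrow y\iff\forall k\,(w_k(x)<w_k(y))$ repairs the gluing direction but not the restriction direction.
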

\begin{proof}
Given a representation $\{ \{u_{i}^k\}_{i=1}^n\}_{k\in \mathcal{K}}$ of the distributed system, the family of functions $\{w_k\}_{k\in \mathcal{K}}$ defined by 
\[ w_k(x)=u_{i}^k(x) \text{ if } x\in X_i, \text{ with } k\in \mathcal{K},\]
is a Richter-Peleg multi-utility representation of the strict partial order $\rightarrow$.

Dually, starting from a Richter-Peleg  multi-utility representation $\{ w_k(x)\}_{k\in \mathcal{K}}$ of the causal relation  $\rightarrow$ of a distributed system 
$(\bigcup_{i=1}^n (X_i, \precsim_i),  \mathrel{\mathcal{P}}=\bigcup_{i\neq j} \mathrel{\mathcal{P}_{ij}}  )$, then the following representation  $\{ \{u_{i}^k\}_{i=1}^n\}_{k\in \mathcal{K}}$ arises:
 \[ u_{i}^k(x)=w_k(x),  \text{ when } x\in X_i, \text{ with } k\in \mathcal{K}, \text{ for each } i=1,...,n.\]\end{proof}

\begin{remark}
Notice that the main difference is the domain of the corresponding functions. In the case of distributed systems the functions are defined on the processes, whereas in the case of  preorders they are defined on all the set (which would be the union of the processes). This difference may be relevant when dealing with continuity and topological spaces. 
\end{remark}

Now we introduce another kind of representation in bijection 
 with the concept of multi-utility  \cite{partial,BH,EvO,Peleg,Richter},  and that it is actually common and known in computing by   \emph{vector clock representation}  (see \cite{virtual,ray}). Due to that coincidence (and in order to distinguish it from the definition before), we shall call it by \emph{vector representation}.

\begin{definition}\label{vrepresentable}\rm
Let $(\bigcup_{i=1}^n (X_i, \precsim_i),  \mathrel{\mathcal{P}}=\bigcup_{i\neq j} \mathrel{\mathcal{P}_{ij}}  )$ 
  be  a distributed system. We say that it is \emph{vector representable} if there exists a family of weaks representations   with respect to $\leq$  (called \emph{vector clocks}) $\{ \{u_{i}^k\}_{i=1}^n\}_{k\in \mathcal{K}}$  such that 
 $x_i \mathrel{\overline{\mathcal{P}}_{ij}} x_j $ iff $ u_{i}^k(x_i)\leq u_{j}^k(x_j)$ for any $k\in \mathcal{K}$, as well as there exists an index $l\in \mathcal{K}$ such that $u_{i}^l(x_i)< u_{j}^l(x_j)$ (for any $x_i\in X_i, \, x_j\in X_j$ and $i\neq j$).

If each set $X_i$ is endowed with a topology $\tau_i$, then we will say that the distributed system is \emph{continuously vector representable} if there exists a continuous  vector representation.
\end{definition}
\begin{remark}
In computing, these vector clocks are constructed through \emph{timestamps} algorithms (see \cite{virtual,ray}).
\end{remark}

The problem of aggregating representations is not trivial. In order to ilustrate that, we include the following example.

\begin{example}\label{exkontra}\rm
Let $\mathcal{P}_{12}$ and $\mathcal{P}_{23}$ two communications between $A=\{a,b\}$ (with $a\prec_A b$) and $X=\{x,y\}$  (with $x\prec_X y$) and $\Lambda=\{\alpha, \beta \}$  (with $\alpha\prec_{\Lambda} \beta$), respectively, defined as follows:
$$a\mathcal{P}_{12} x\mathcal{P}_{23} \beta, \quad a\mathcal{P}_{12} y \quad\text {and}\quad b\mathcal{P}_{12} y.$$
Now we define the tuple   $(u,v,w)$ by $u(a)=0, u(b)=1, v(x)=1, v(y)=2, w(\alpha)=1$ and $w(\beta)=2$.
Then, the  pairs $(u,v)$ and $(v,w)$ are representations  of the distributed systems defined on $A\cup X$ and on $X\cup \Lambda$, respectively.
However, the tuple  $(u,v,w)$  fails to represent the distributed system made up by the three processes:
  $u(a)=0<w(\alpha)=1$ but $\neg (a\rightarrow \alpha)$.
\end{example}

 \begin{figure}[htbp]
\begin{center} 
\begin{tikzpicture}[scale=1.2] 
\draw[fill=gray!20!white] (0,1.5) 
ellipse (10pt and 30pt);

\draw[fill=gray!20!white] (1,2) 
ellipse (10pt and 30pt);
\draw[fill=gray!20!white] (2,1.5) 
ellipse (10pt and 30pt);
    \draw[] (1,1.5)--(1,2.5);
    \draw[] (0,1)--(0,2);
    \draw[] (2,1)--(2,2);
    \draw[dashed, -latex] (0,1)--(1,1.5);
    \draw[dashed, -latex] (0,2)--(1,2.5);
    \draw[dashed, -latex] (1,1.5)--(2,2);

\draw (0,2) node[anchor=east] {\small $b$};  
\draw (0,1) node[anchor=east] {\small $a$};   
\draw (2.35,1.9) node[anchor=east] {\small $\beta$};  
\draw (2.35,1.1) node[anchor=east] {\small $\alpha$};  
\draw (1.2,2.7) node[anchor=east] {\small $y$};
\draw (1.2,1.3) node[anchor=east] {\small $x$};

\draw (0.2,2) node[anchor=east] {$\bullet$};  
\draw (0.2,1) node[anchor=east] {$\bullet$};   
\draw (2.2,2) node[anchor=east] {$\bullet$};  
\draw (2.2,1) node[anchor=east] {$\bullet$};  
\draw (1.2,2.5) node[anchor=east] {$\bullet$};
\draw (1.2,1.5) node[anchor=east] {$\bullet$};
\end{tikzpicture}\qquad\qquad
\begin{tikzpicture}[scale=1.2] 
\draw[fill=gray!20!white] (0,1.5) 
ellipse (10pt and 30pt);

\draw[fill=gray!20!white] (1,2) 
ellipse (10pt and 30pt);
\draw[fill=gray!20!white] (2,1.5) 
ellipse (10pt and 30pt);
    \draw[] (1,1.5)--(1,2.5);
    \draw[] (0,1)--(0,2);
    \draw[] (2,1)--(2,2);
    \draw[dashed, -latex] (0,1)--(1,1.5);
    \draw[dashed, -latex] (0,2)--(1,2.5);
    \draw[dashed, -latex] (1,1.5)--(2,2);

\draw (0,2) node[anchor=east] {\small $1$};  
\draw (0,1) node[anchor=east] {\small $0$};   
\draw (2.35,1.9) node[anchor=east] {\small $2$};  
\draw (2.35,1.1) node[anchor=east] {\small $1$};  
\draw (1.2,2.7) node[anchor=east] {\small $2$};
\draw (1.2,1.3) node[anchor=east] {\small $1$};

\draw (0.2,2) node[anchor=east] {$\bullet$};  
\draw (0.2,1) node[anchor=east] {$\bullet$};   
\draw (2.2,2) node[anchor=east] {$\bullet$};  
\draw (2.2,1) node[anchor=east] {$\bullet$};  
\draw (1.2,2.5) node[anchor=east] {$\bullet$};
\draw (1.2,1.5) node[anchor=east] {$\bullet$};

\end{tikzpicture}\\
\medskip
$ $
\caption{A distributed system. Each process contains just two events. The dashed arrows represent the communication between processes.}
\label{Fcounter}
\end{center}
\end{figure}
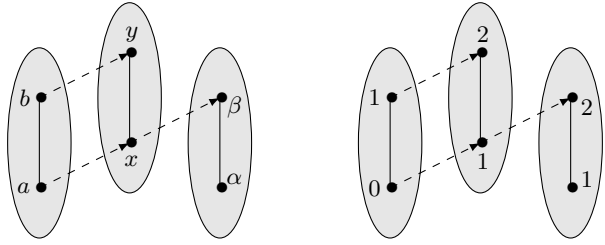

In the following lines it is shown how to construct   weak representations of distributed systems with line communications starting from pairs of functions that represent each biorder.  For more clearness, before introduce the general case, first we include here the particular case of a distributed system of three processes with a linear communication.

\begin{proposition}\label{proptupleweak2}
Let $(\bigcup_{i=1}^3 (X_i, \precsim_i),  \mathrel{\mathcal{P}}=\bigcup_{i=1 }^{2} \mathrel{\mathcal{P}_{i\, i+1}}  )$ a distributed system of three processes with a linear communication such that for every  $i\in \{1,2\}$ the pair $(u_i,v_i)$ is a representation of the biorder $\mathrel{\overline{\mathcal{P}}_{i\,i+1}}$\footnote{We may assume, without loss of generality,  that the codomain of the utilities is the interval $(0,1)$ instead of the all real line.}. Then, 
the tuple $(u=u_1+u_{u_2}, v=v_1+u_2,w=v_2+v_{v_1})$ is a weak representation  of the distributed system, where $u_{u_2}$ and $v_{v_1}$ are defined  as follows:
$$u_{u_2}(x)=\inf\{u_2(y)\colon x\overline{\mathcal{P}}_{12} y\, ; \, y\in X_2\}; \text{ for any } x\in X_1,$$
$$v_{v_1}(x)=\sup\{v_1(y)\colon y\overline{\mathcal{P}}_{23} x\, ; \, y\in X_2\}; \text{ for any } x\in X_3,$$
on the assumption that $\inf\{\emptyset\}=1$ and $\sup\{\emptyset\}=0.$

\end{proposition}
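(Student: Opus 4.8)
The plan is to check directly that the triple $(u,v,w)$ meets the two requirements in Definition~\ref{weakly}: each of $u,v,w$ must represent the local order of its process, and the cross-process implications $x\,\overline{\mathcal{P}}_{12}\,y\Rightarrow u(x)<v(y)$ and $y\,\overline{\mathcal{P}}_{23}\,z\Rightarrow v(y)<w(z)$ must hold. The remaining biorders $\overline{\mathcal{P}}_{ij}$ are empty, since the communication is linear and finite, so those implications are vacuous and need no attention. Throughout I use that each given pair represents its biorder together with the two chains it connects, i.e. $u_1,v_1$ represent $\precsim_1,\precsim_2$ and $u_2,v_2$ represent $\precsim_2,\precsim_3$, with $x\,\overline{\mathcal{P}}_{12}\,y\iff u_1(x)<v_1(y)$ and $y\,\overline{\mathcal{P}}_{23}\,z\iff u_2(y)<v_2(z)$.

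The first key step is an absorption observation about $\overline{\mathcal{P}}=\precsim\circ\,\mathcal{P}\,\circ\precsim$: if $x\precsim_1 x'$ and $x'\,\overline{\mathcal{P}}_{12}\,y$, then $x\,\overline{\mathcal{P}}_{12}\,y$, so the forward set $\{y\in X_2: x\,\overline{\mathcal{P}}_{12}\,y\}$ only grows as $x$ decreases in $\precsim_1$. Hence $u_{u_2}(x)=\inf\{u_2(y):x\,\overline{\mathcal{P}}_{12}\,y\}$ is isotonic for $\precsim_1$ (an infimum over a larger set is smaller, and the convention $\inf\emptyset=1$ together with $u_2<1$ handles the empty case). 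Dually, $y\,\overline{\mathcal{P}}_{23}\,z$ and $z\precsim_3 z'$ give $y\,\overline{\mathcal{P}}_{23}\,z'$, so $v_{v_1}(z)=\sup\{v_1(y):y\,\overline{\mathcal{P}}_{23}\,z\}$ is isotonic for $\precsim_3$. Since $u_1$ represents $\precsim_1$ and $u_{u_2}$ is merely isotonic for it, the sum $u=u_1+u_{u_2}$ is strictly $\precsim_1$-monotone, hence represents $\precsim_1$; symmetrically $w=v_2+v_{v_1}$ represents $\precsim_3$. For the middle process, both $v_1$ and $u_2$ represent $\precsim_2$, so $v=v_1+u_2$ is again strictly $\precsim_2$-monotone and represents $\precsim_2$.

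For the cross conditions I would exploit that the infimum and supremum are bounded by their witnessing element. If $x\,\overline{\mathcal{P}}_{12}\,y$, then $u_1(x)<v_1(y)$, and since $y$ itself lies in $\{y':x\,\overline{\mathcal{P}}_{12}\,y'\}$ we get $u_{u_2}(x)\le u_2(y)$; adding gives $u(x)=u_1(x)+u_{u_2}(x)<v_1(y)+u_2(y)=v(y)$. Dually, if $y\,\overline{\mathcal{P}}_{23}\,z$, then $u_2(y)<v_2(z)$, and $y\in\{y':y'\,\overline{\mathcal{P}}_{23}\,z\}$ forces $v_1(y)\le v_{v_1}(z)$, whence $v(y)=v_1(y)+u_2(y)<v_{v_1}(z)+v_2(z)=w(z)$. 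This supplies both $(u,v)$ and $(v,w)$ with the required weak-representation inequalities.

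The delicate point — and the reason for the $\inf/\sup$ construction rather than simply reusing $u_2$ or $v_1$ — is that each auxiliary term must play a double role: $u_{u_2}$ has to be small enough that $u_1(x)+u_{u_2}(x)$ stays below $v(y)$ for \emph{every} receiver $y$ of $x$, yet monotone enough in $\precsim_1$ not to spoil the local representation. The infimum over the whole forward set is exactly the largest choice that meets this upper bound uniformly while staying isotonic by the absorption property above, and this balancing is where I expect the real work. It is also here that the hypothesis that each pair represents the local chains (and not merely the traces of the biorder) is essential: without it $v=v_1+u_2$ could fail to separate two events of $X_2$ that are communication-equivalent on both sides but distinct in $\precsim_2$, so $v$ would not represent $\precsim_2$.
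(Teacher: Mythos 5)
Your proof is correct and takes essentially the same route as the paper's: the absorption property of $\overline{\mathcal{P}}=\precsim\circ\,\mathcal{P}\,\circ\precsim$ gives isotonicity of $u_{u_2}$ and $v_{v_1}$, witness bounds give the adjacent cross-inequalities, and sums of a strictly monotone plus a weakly monotone function give the local representations; your explicit flagging of the hypothesis that $u_i,v_i$ must also represent the local chains matches what the paper's proof silently uses (the statement omits it, but the $n$-process Theorem~\ref{proptupleweakn} adds it explicitly). The only visible difference is that the paper separately verifies the two-hop case $x\,\overline{\mathcal{P}}_{12}\,y\,\overline{\mathcal{P}}_{23}\,z\Rightarrow u(x)<w(z)$, which you may legitimately skip since $\overline{\mathcal{P}}_{13}=\emptyset$ under line communication, and which in any case follows at once by transitivity from your inequalities $u(x)<v(y)<w(z)$.
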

\begin{proof}
First, notice that since the utilities take values on $(0,1)$, and taking into account that $\inf\{\emptyset\}=1$ and $\sup\{\emptyset\}=0$,  the functions $u_{u_2}$ and $v_{v_1}$ are well defined (that is, the infimum and the supremum always exist).

Let $x,y$ be two elements such that $x\overline{\mathcal{P}}_{12} y$. Then, it holds true that $u_1(x)<v_1(y)$ as well as $u_{u_2}(x)\leq u_2(y)$. Therefore, the condition $u(x)<v(y)$ is satisfied.
We argue analogously for a pair of elements $x,y$ such that $x\overline{\mathcal{P}}_{23} y$.


Since $x\precsim_1 x' \overline{\mathcal{P}}_{12} y$ implies $x\overline{\mathcal{P}}_{12} y$, the inequality 
$u_{u_2}(x)\leq u_{u_2}(x')$ is also satisfied. 
If $x\precsim_1 x'$ and there is no $y\in X_2$ such that $x' \overline{\mathcal{P}}_{12} y$, then $u_{u_2}(x')=\inf\{\emptyset\}=1$ so, again, it holds that $u_{u_2}(x)\leq u_{u_2}(x')$. Thus, we conclude that $u_{u_2}(x)\leq u_{u_2}(x')$ is always satisfied 
 for any $x,x'\in X_1$ such that $x\precsim_1 x'$. We argue analogously for a pair of elements $y,y'\in X_3$ such that $y\precsim_{3} y'$.
In addition, the functions $v_1$ and $u_2 $ also represent the total order $\precsim_2$, as well as $u_1$ and $v_2$ represent the total orders $\precsim_1$ and $\precsim_3$, respectively. Hence, we deduce that the functions $u=u_1+u_{u_2}, v=v_1+u_2$ and $w=v_2+v_{v_1}$ are representations of the total orders $\precsim_1$, $\precsim_2$ and $\precsim_3$, respectively.

Let $x,z$ be now two elements such that $x\overline{\mathcal{P}}_{12} y \overline{\mathcal{P}}_{23}  z$, for some $y\in X_2$. Then,  since it holds true that  $u_1(x)<v_1(y)\leq v_{v_1}(z)$ and $u_{u_2}(x)\leq u_2(y)\leq v_{2}(z)$, the condition $u(x)<w(z)$ is satisfied.

Thus, we conclude that $(u=u_1+u_{u_2}, v=v_1+u_2,w=v_2+v_{v_1})$ is a weak representation of the distributed system.
\end{proof}

Before generalize the proposition above to $n$ processes, first we introduce the following operators.

\begin{definition}\label{defoperator}\rm
Let   \mbox{$(\bigcup_{i=1}^2 (X_i, \precsim_i),  \mathrel{\mathcal{P}} )$} be  a distributed system with a single communication $\mathcal{P}$ from $X_1$ to $X_2$.
Let $u $ and $v$ be two (not necessarily strictly) increasing functions on $X_1$ and $X_2$ (respectively) that take values on $(0,1)$. 
Assume  that $\inf\{\emptyset\}=1$ and $\sup\{\emptyset\}=0.$
Then, we define the following two operators:

$$ \underline{op}(v)(x)=\inf \{v(y) \colon x\overline{\mathcal{P}} y\}_{\{y\in X_2\}}\, ; \quad x\in X_1, $$
$$ \overline{op}(u)(x)=\sup \{u(y) \colon y\overline{\mathcal{P}} x\}_{\{y\in X_1\}}\, ; \quad x\in X_2. $$

We shall call these operators  \emph{lower operator} and \emph{upper operator}, respectively.

\end{definition}

\begin{remark}\label{Rclave}
\noindent$(1)$
Notice that, starting from a function $u$ on $X_1$, $\overline{op}(u)$ defines a new function on $X_2$, and not in $X_1$. Dually, starting now from a function $v$ on $X_2$, $\underline{op}(v)$ defines a new function on $X_1$, and not in $X_2$. 
\smallskip

\noindent$(2)$ In fact, since sending and receiving messages is an event, the infimum (of a non-empty set) is a minimum and the suprema (of a non-empty set) is a maximum.  Otherwise, the infimum of an empty set is the top of the ordered set, that is, 1, and the supremum is the bottom, that is, 0. 
\end{remark}

\begin{proposition}\label{Poppro}
Let   $(\bigcup_{i=1}^2 (X_i, \precsim_i),  \mathrel{\mathcal{P}} )$ be a distributed system with a single communication $\mathcal{P}$ from $X_1$ to $X_2$.
Let $u $ and $v$ be two (not necessarily strictly) increasing functions on $X_1$ and $X_2$ (respectively) that take values on $(0,1)$.
Then, the following properties are satisfied:
\begin{enumerate}
\item[($i$)] $\underline{op}(v)$ and $\overline{op}(u)$ are increasing in $X_1$ and $X_2$, respectively.
\item[($ii$)] The pairs $(\underline{op}(v), v )$ and $(u, \overline{op}(u))$ represent the biorder $\overline{\mathcal{P}}$ with respect to $\leq$.
\item[$(iii)$]  $x_1\sim^*y_1$ implies $\underline{op}(v)(x_1)=\underline{op}(v)(y_1)$, as well as $x_2\sim^{**}y_2$ implies $\overline{op}(v)(x_2)=\overline{op}(v)(y_2)$, for any $x_1,y_1\in X_1, \, x_2,y_2\in X_2$.\footnote{Here, $\sim^*$ and $\sim^{**}$ denote the equivalence relations associated to the traces $ \precsim^*$ and $ \precsim^{**}$ of the biorder $\mathrel{\overline{\mathcal{P}}} $, respectively.}
\end{enumerate}
\end{proposition}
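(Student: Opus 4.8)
The plan is to verify each of the three claims directly from the definitions of the lower and upper operators, exploiting that exactly one of $X_1$, $X_2$ is a chain is \emph{not} assumed here --- instead $\overline{\mathcal{P}}$ is already a biorder by hypothesis (via the communication structure), and the traces $\precsim^*$, $\precsim^{**}$ are total preorders by Remark~\ref{rty}. I would treat $\underline{op}$ and $\overline{op}$ symmetrically, proving each assertion for $\underline{op}(v)$ and transferring the argument to $\overline{op}(u)$ by duality (interchanging the roles of senders/receivers, $\inf/\sup$, and $\precsim^*/\precsim^{**}$).

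For part $(i)$, I would fix $x \precsim_1 x'$ in $X_1$ and show $\underline{op}(v)(x) \le \underline{op}(v)(x')$. The key observation is that $x' \overline{\mathcal{P}} y$ together with $x \precsim_1 x'$ yields $x \overline{\mathcal{P}} y$, because $x \precsim_1 x' \precsim_1 x'' \mathcal{P} y'' \precsim_2 y$ collapses (using transitivity of $\precsim_1$) into a witness chain for $x \overline{\mathcal{P}} y$. Hence the set $\{v(y)\colon x'\overline{\mathcal{P}}y\}$ is contained in $\{v(y)\colon x\overline{\mathcal{P}}y\}$, so the infimum over the larger set is no larger; this gives $\underline{op}(v)(x)\le\underline{op}(v)(x')$. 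The empty-set convention $\inf\{\emptyset\}=1$ must be handled as a separate case exactly as in the proof of Proposition~\ref{proptupleweak2}: if $x'$ sends nothing then $\underline{op}(v)(x')=1$ is maximal, so monotonicity holds trivially.

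For part $(ii)$, I would show that $(\underline{op}(v),v)$ represents $\overline{\mathcal{P}}$ with respect to $\le$, i.e. $x\,\overline{\mathcal{P}}\,y \iff \underline{op}(v)(x)\le v(y)$. The forward direction is immediate: if $x\overline{\mathcal{P}}y$ then $v(y)$ belongs to the set over which the infimum defining $\underline{op}(v)(x)$ is taken, so $\underline{op}(v)(x)\le v(y)$. The converse is the delicate point and is where I expect the main obstacle: from $\underline{op}(v)(x)\le v(y)$ I must recover $x\overline{\mathcal{P}}y$. Here I would invoke Remark~\ref{Rclave}$(2)$, which guarantees that for a nonempty contour set the infimum is actually attained as a minimum (since sending/receiving is an event, the relevant set is finite and nonempty whenever $x$ sends at all). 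Thus $\underline{op}(v)(x)=v(y_0)$ for some $y_0$ with $x\overline{\mathcal{P}}y_0$, and $v(y_0)\le v(y)$ forces $y_0\precsim^{**}y$ via the characterization of the trace in Remark~\ref{rty}; then $x\overline{\mathcal{P}}y_0\precsim^{**}y$ combined with the Ferrers/biorder property yields $x\overline{\mathcal{P}}y$. The empty case ($\underline{op}(v)(x)=1$, impossible to be $\le v(y)<1$) is vacuous and must be noted.

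For part $(iii)$, I would use that $x_1\sim^* y_1$ means $x_1\precsim^* y_1$ and $y_1\precsim^* x_1$, and by the trace characterization $a\precsim^* b \iff (b\overline{\mathcal{P}}x \Rightarrow a\overline{\mathcal{P}}x)$ this forces the two contour sets $\{v(y)\colon x_1\overline{\mathcal{P}}y\}$ and $\{v(y)\colon y_1\overline{\mathcal{P}}y\}$ to coincide. Equal sets have equal infima, so $\underline{op}(v)(x_1)=\underline{op}(v)(y_1)$, and the dual statement follows identically for $\overline{op}$ and $\sim^{**}$. This part is essentially a formal consequence of part $(ii)$ and the trace definitions, so the only care needed is to confirm the empty-contour case respects the convention (both sides equal $1$). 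I therefore anticipate that assembling the converse inequality in part $(ii)$ --- specifically justifying attainment of the infimum and applying the Ferrers property --- is the crux, while $(i)$ and $(iii)$ are routine set-inclusion and set-equality arguments.
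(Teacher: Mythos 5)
Your proposal matches the paper's proof in all essentials: part~$(i)$ via the contour-set inclusion induced by $x\precsim_1 x'$ (with the empty-set convention handled separately), part~$(ii)$ with the forward implication immediate from the definition of the infimum and the converse via the attainment of the infimum guaranteed by Remark~\ref{Rclave}~$(2)$, and part~$(iii)$ via equality of the contour sets under $\sim^*$. The only immaterial difference is in the converse of $(ii)$: you pass from $v(y_0)\le v(y)$ to the trace relation $y_0\precsim^{**}y$ and then invoke the characterization of Remark~\ref{rty}, whereas the paper concludes $z\precsim_2 y$ in the chain $(X_2,\precsim_2)$ and composes directly with the definition of $\overline{\mathcal{P}}$; both variants rest on exactly the same monotonicity argument for $v$, so the two proofs are equivalent.
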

\begin{proof}
\noindent$(i):$ 
If $x\precsim_1 y$, then   it holds that $y\overline{\mathcal{P}} z $ implies $x\overline{\mathcal{P}} z $. Therefore,  applying the definition of the lower operator, it follows that the inequality $\underline{op}(v)(x)\leq \underline{op}(v)(y)$ is satisfied, that is, $\underline{op}(v)$ is increasing with respect to $\precsim_1$ on $X_1$.
We argue dually in order to prove that $\overline{op}(u)$ is increasing with respect to the total order $\precsim_{2}$ defined on $X_{2}$.
\medskip

\noindent$(ii):$ 
If $x\overline{\mathcal{P}} y$, then  applying the definition of the lower operator, it is clear that the inequality $\underline{op}(v)(x)\leq (v)(y)$ is satisfied. We argue analogously for the pair $(u, \overline{op}(u))$.
On the other hand, suppose that $\underline{op}(v)(x)\leq v(y)$. Since $v$ takes values on $(0,1)$, $\underline{op}(v)(x)=r\in (0,1)$, which means that (by definition of $\underline{op}(v)$) there exists $z\in X_2$ such that $x\overline{\mathcal{P}} z$ with $v(z)=r$ (here take into account Remark~\ref{Rclave} (2)). Thus, $v(z)=r\leq v(y)$ and then, $z\precsim_2 y$. Therefore, we conclude that $x\overline{\mathcal{P}} y$. 
Hence, the pair $(\underline{op}(v), v )$  represents the biorder with respect to $\leq$.
\medskip

\noindent$(iii):$ 
If  $x_1\sim^*y_1$, then $x_1\overline{\mathcal{P}} z$ holds if and only if $y_1\overline{\mathcal{P}} z$  is satisfied, for any $z\in X_2$. Therefore, by the definition of the lower opertator, the equality $\underline{op}(v)(x_1)= \underline{op}(v)(y_1)$ holds true.  
We argue analogously for  the indifference  $\sim^{**}$.
\end{proof}

\begin{remark}\label{Rop}

\noindent$(1)$
Dealing with a distributed system  $(\bigcup_{i=1}^n (X_i, \precsim_i),  \mathrel{\mathcal{P}}=\bigcup_{i\neq j } \mathrel{\mathcal{P}_{ij}}  )$ of $n$ processes, since --by Proposition~\ref{Poppro} $(i)$-- $\underline{op}(u_i)$ and $\overline{op}(u_i)$ are increasing in their corresponding sets ($X_{i-1}$ and $X_{i+1}$, respectively), it 
is possible to apply an operator more than once. So, starting from an increasing function $u_i$ in $X_i$,
we shall denote by $\underline{op}^2(u_i)$  the function $\underline{op}(\underline{op}(u_i))$ defined in $X_{i-2}$.
This notation is generalized to  $\underline{op}^k(u_i)$, achieving a function in $X_{i-k}$.
We shall use the same notation for the upper operator $\overline{op}$.
Since the hypothesis of Proposition~\ref{Poppro} are again satisfied (now for $\underline{op}^k(u_i)$ and $\overline{op}^k(u_i)$), the proporties $(i)$ and ($ii$) are also true for these iterations.
\medskip

\noindent$(2)$
The fact that the functions $u$ and  $v$ are strictly increasing (thus, they represent the corresponding total preorder)   does not guarantee that $\underline{op}(v)$ and $\overline{op}(u)$ are also.
\end{remark}

\begin{theorem}\label{proptupleweakn}
Let $(\bigcup_{i=1}^n (X_i, \precsim_i),  \mathrel{\mathcal{P}}=\bigcup_{i=1 }^{n-1} \mathrel{\mathcal{P}_{i\, i+1}}  )$ a distributed system of $n$ processes with a line  communication 
  such that for every  $i\in \{1,...,n-1 \}$ the pair $(u_i,v_i)$ is a representation of the biorder $\overline{\mathcal{P}}_{i\,i+1}$, with the additional property that $u_i$ and $v_i$ represent the total  orders $\precsim_i$ and $\precsim_{i+1}$, respectively.
Then, 
 $(w_1,...,w_{n})$ is a  weak representation of the distributed system, where each function $w_i$ is defined on $X_i$ by a sum of $n-1$ functions as follows:
 \begin{center}
 \begin{tabular}{ccc}
 $w_1$ & $=$ & $u_1+ \sum_{k=1}^{n-2} \underline{op}^k(u_{k+1})$\\
  $w_2$ & $=$ & $v_1+ u_2+ \sum_{k=1}^{n-3} \underline{op}^k(u_{k+2})$\\
   $w_3$ & $=$ & $\overline{op}(v_1)+ v_2+u_3+ \sum_{k=1}^{n-4} \underline{op}^k(u_{k+3})$\\
   $\vdots$ & $\vdots$ & $\vdots$\\
    $w_j$ & $=$ &  $\sum_{k=1}^{j-2} \overline{op}^{j-1-k}(v_{k}) + v_{j-1} +u_j+  \sum_{k=1}^{n-j-1} \underline{op}^k(u_{k+j})$\\
   $\vdots$ & $\vdots$ & $\vdots$\\
    $w_n$ & $=$ & $v_{n-1}+ \sum_{k=1}^{n-2} \overline{op}^k(v_{n-1-k})$
 \end{tabular}
 \end{center}
 
\end{theorem}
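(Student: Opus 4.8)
The plan is to verify separately the two requirements of Definition~\ref{weakly}: that each $w_j$ is an order-monomorphism for the total order $\precsim_j$, and that for every pair $i\neq j$ the implication $x_i\mathrel{\overline{\mathcal{P}}_{ij}}x_j\Rightarrow w_i(x_i)<w_j(x_j)$ holds. Since the communication is a line communication, $\mathcal{P}_{ij}=\{\emptyset\}$ whenever $j\neq i+1$, so $\overline{\mathcal{P}}_{ij}=\emptyset$ for every non-adjacent or backward pair and the corresponding implication is vacuously true; hence only the adjacent pairs $(w_j,w_{j+1})$ require real work. A preliminary observation is that every summand of $w_j$ is genuinely a function on $X_j$: by the domain-shifting convention of Remark~\ref{Rop}(1), the term $\overline{op}^{\,j-1-k}(v_k)$ lifts $v_k$ (defined on $X_{k+1}$) up $j-1-k$ processes to $X_j$, while $\underline{op}^{\,k}(u_{k+j})$ lowers $u_{k+j}$ (on $X_{k+j}$) down $k$ processes to $X_j$.

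For the first requirement I would argue that each $w_j$ is a finite sum of functions that are all isotonic for $\precsim_j$. Indeed $u_j$ and $v_{j-1}$ represent $\precsim_j$ by hypothesis, and every operator term is isotonic by Proposition~\ref{Poppro}(i) together with its iterated version stated in Remark~\ref{Rop}(1). A sum of isotonic functions is isotonic, and since at least one summand is a strict order-monomorphism (namely $u_j$ when $j\le n-1$, and $v_{n-1}$ in the boundary function $w_n$), the whole sum is strictly isotonic. Because $\precsim_j$ is total, strict isotonicity forces the equivalence $x\precsim_j y\iff w_j(x)\le w_j(y)$, so $w_j$ represents $\precsim_j$.

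The core of the argument is the adjacent-pair inequality, which I would establish by a term-by-term comparison after regrouping $w_j$ and $w_{j+1}$ by their \emph{source} utilities as $w_j(x)=\sum_{k=1}^{j-1}\overline{op}^{\,j-1-k}(v_k)(x)+u_j(x)+\sum_{m=j+1}^{n-1}\underline{op}^{\,m-j}(u_m)(x)$ and $w_{j+1}(y)=\sum_{k=1}^{j}\overline{op}^{\,j-k}(v_k)(y)+\sum_{m=j+1}^{n-1}\underline{op}^{\,m-j-1}(u_m)(y)$. Every source function except $u_j$ and $v_j$ occurs in both. Fix $x\in X_j$, $y\in X_{j+1}$ with $x\mathrel{\overline{\mathcal{P}}_{j\,j+1}}y$. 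For each $v_k$ with $k\le j-1$ the two occurrences differ by one extra application of $\overline{op}$, so putting $g=\overline{op}^{\,j-1-k}(v_k)$, the pair $(g,\overline{op}(g))$ represents $\overline{\mathcal{P}}_{j\,j+1}$ with respect to $\le$ by Proposition~\ref{Poppro}(ii) (and Remark~\ref{Rop}(1)), whence $g(x)\le\overline{op}(g)(y)$. Dually, for each $u_m$ with $m\ge j+1$, putting $h=\underline{op}^{\,m-j-1}(u_m)$, the pair $(\underline{op}(h),h)$ represents $\overline{\mathcal{P}}_{j\,j+1}$ with respect to $\le$, giving $\underline{op}(h)(x)\le h(y)$. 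The only unmatched summands are $u_j(x)$ in $w_j$ and $v_j(y)$ in $w_{j+1}$, and since $(u_j,v_j)$ represents the biorder $\overline{\mathcal{P}}_{j\,j+1}$ we obtain the \emph{strict} inequality $u_j(x)<v_j(y)$. Adding the matched $\le$ inequalities to this one strict inequality yields $w_j(x)<w_{j+1}(y)$, as required.

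I expect the main obstacle to be organizational rather than conceptual: getting the index bookkeeping right so that the operator powers attached to each common source function in $w_j$ and in $w_{j+1}$ differ by exactly one, and checking that the \emph{last} operator application in each matched pair is taken with respect to the communication $\mathcal{P}_{j\,j+1}$ (rather than some other $\mathcal{P}_{i\,i+1}$), which is precisely what makes Proposition~\ref{Poppro}(ii) applicable under the hypothesis $x\mathrel{\overline{\mathcal{P}}_{j\,j+1}}y$. The boundary functions $w_1$ and $w_n$ are handled by the same pairing, with the convention that empty sums and the nonexistent terms $v_0$ and $u_n$ are omitted; one then checks that $(w_1,w_2)$ is governed by the strict inequality $u_1(x)<v_1(y)$ and $(w_{n-1},w_n)$ by $u_{n-1}(x)<v_{n-1}(y)$, so no separate argument is needed for the endpoints.
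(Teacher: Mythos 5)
Your proof is correct and takes essentially the same approach as the paper's: the paper's tabular decomposition of each $w_i$ into $n-1$ functions (rows indexed by the source utilities $v_1,\dots,v_{j-1},u_j,u_{j+1},\dots,u_{n-1}$) is exactly your regrouping, and your term-by-term pairing across adjacent columns---matched $\leq$ inequalities from Proposition~\ref{Poppro}(ii) together with Remark~\ref{Rop}(1), plus the single strict inequality $u_j(x)<v_j(y)$---is precisely the verification the paper compresses into ``it is straightforward to check.'' Your explicit observations that non-adjacent pairs are vacuous under line communication and that the last operator application in each matched pair is the one taken with respect to $\mathcal{P}_{j\,j+1}$ are details the paper leaves implicit, not deviations from its argument.
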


\begin{proof}

First, in the following table we recover the distinct functions defined on each process:

\begin{center}
\begin{tabular}{|c|c|c|c|c|c|}\hline
$X_1$ & $X_2$ & $ X_3$ & $\cdots $ & $X_{n-1}$ & $X_n$\\ \hline
 $u_1$ & $v_1$ & $\overline{op}(v_1)$ & $\cdots$ &  $\overline{op}^{n-3}(v_1)$ & $ \overline{op}^{n-2}(v_1)$\\ \hline
$\underline{op}(u_2)$ & $u_2$ & $v_2$ & $\cdots$ & $\overline{op}^{n-4}(v_2)$ & $ \overline{op}^{n-3}(v_2)$\\ \hline
$\underline{op}^2(u_3)$& $\underline{op}(u_3)$  & $u_3$ &  $\cdots$ & $\overline{op}^{n-5}(v_3)$ & $ \overline{op}^{n-4}(v_3)$\\ \hline
 $\cdots$ & $\cdots$ & $\cdots$ & $\cdots$ & $\cdots$ & $\cdots$\\ \hline
 $\underline{op}^{n-2}(u_{n-1})$& $\underline{op}^{n-3}(u_{n-1})$  & $\underline{op}^{n-4}(u_{n-1})$ &  $\cdots$ & $u_{n-1}$ & $v_{n-1}$\\ \hline
\end{tabular}

\end{center}

So, then, each function $w_i$ is the sum of all  these $n-1$ functions defined on the set $X_i$.
Let's see now that this tuple $(w_1,...,w_{n})$ is a  weak representation of the distributed system.

Since --by Proposition~\ref{Poppro} $(i)$-- all the functions defined on each set $X_i$ (for each $i\in \{1,...,n\}$) are increasing (with respect to the corresponding total  order $\precsim_i$) and there is --at least-- one which is strictly increasing ($u_i$ and/or $v_{i-1}$), we conclude that the sum of all of them (denoted by $w_i$) is a representation of the total  order $\precsim_i$.

Finally, taking into account Proposition~\ref{Poppro} $(ii)$  and that $(u_i,v_i)$ is a representation of the biorder $\overline{\mathcal{P}}_{i\,i+1} $ (for each $i\in \{1, ..., n-1\}$) with respect to $<$, it is straightward to check that if $x\overline{\mathcal{P}}_{i\, i+1} y$ holds then  $w_i(x)< w_{i+1}(y)$ is satisfied (for each $i\in \{1, ..., n-1\}$ and for any $x\in X_i, \, y\in X_{i+1}$). So, we conclude that $(w_1,...,w_{n})$ is a  weak representation of the distributed system.
\end{proof}

\section{Quasi-finite partial orders}\label{snf}

In this section a particular but interesting class of partial orders are studied: \emph{quasi-finite partial orders}.
This kind of structures includes all those partial orders that can be understood as a finite family of chains with a 
 communication. 
The key of this section is to focus the research on the quotient sets (with respect to the traces of the biorders), which makes possible a discrete study of the representability, achieving results not only of the quotient structure but also of the original one. Thus, it is also possible to apply some techniques on finite posets as those introduced in \cite{qm}.

In fact, given a distributed system 
$(\bigcup_{i=1}^n (X_i, \precsim_i),  \mathrel{\mathcal{P}}=\bigcup_{i\neq j} \mathrel{\mathcal{P}_{ij}}  )$, we may define   an equivalence relation $\mathcal{I}_i$ on $X_i$ by means of  the intersection of all the equivalence relations $\mathcal{I}_{ij}^*$ and $\mathcal{I}_{ji}^{**} $ (for any $ i\neq j$) on $X_i$ (that is, the equivalence relation associated to the union of  all the traces on $X_i$) (see Remark~\ref{rty}). 
  Then, since the communication is a finite relation, 
   the cardinal of each quotient set $\overline{X}_i=X_i \slash \mathcal{I}_i $ is finite, for any $i=1, ..., n$.

The goal of the present section is the attainment of a method to construct  finite Richter-Peleg multi-utility representations for quasi-finite partial orders, i.e., a representation method for distributed systems. 
For more clarity, 
 Example~\ref{Equotient2} is included in order to show this procedure.
\medskip

Let's see how  quasi-finite partial orders are defined.

\begin{definition}\label{Dnearfinite}\rm
  We shall say that a partial order on a set is  \emph{quasi-finite} if it is the causal relation of a  distributed system.
\end{definition}

\begin{remark}
\noindent $(1)$ By definition, quasi-finite partial orders are near-complete.
\medskip

\noindent $(2)$ Given a    distributed system 
$(\bigcup_{i=1}^n (X_i, \precsim_i),  \mathrel{\mathcal{P}}=\bigcup_{i\neq j} \mathrel{\mathcal{P}_{ij}}  )$, we may be interested just in the communication  $\mathrel{\mathcal{P}}$ and skip the remaining information, that is, the total  orders $\precsim_i$. In that case, a finite poset  $(\bigcup_{i=1}^n (X_i\slash \mathcal{I}_i, \overline{\precsim}_i),  \mathrel{\mathcal{P}}=\bigcup_{i\neq j} \mathrel{\mathcal{P}_{ij}}  )$
   is achieved, where $\overline{\precsim}_i$ is the total order on $\overline{X}_i=X_i\slash \mathcal{I}_i$  and now the communication $\mathrel{\mathcal{P}}$ is restricted to the quotient sets (see Example~11 in \cite{dsjmp}).
\end{remark}

The following theorem shows how to construct a Richter-Peleg multi-utility representation of a quasi-finite partial order,  just starting from a  bijective Richter-Peleg multi-utility representation (see Definition~\ref{lakeste})  of a finite poset and utilities of total preorders.

\begin{theorem}\label{Prprep}
Let $\precsim$ be a quasi-finite partial order on $X$  that coincides with the causal relation associated to a distributed system
$(\bigcup_{i=1}^n (X_i, \precsim_i),  \mathrel{\mathcal{P}}=\bigcup_{i\neq j} \mathrel{\mathcal{P}_{ij}}  )$.
 Let $\{w_i\}_{i=1}^n$ be a family of utilities\footnote{We may assume, without loss of generality,  that the codomain of the utilities is the interval $(0,1)$ instead of the all real line.} $w_i\colon (X_i, \precsim_i)\to (0,1)$ and  $\mathcal{U}=\{u_l\}_{l=1}^k$  a bijective Richter-Peleg multi-utility representation  associated to  $(\bigcup_{i=1}^n (X_i\slash \mathcal{I}_i, \overline{\precsim}_i),$ $  \mathrel{\mathcal{P}} )$.
Then, the family of functions $\mathcal{V}=\{v_l\}_{l=1}^k$ defined by
\[v_l(x)= u_l(\bar{x})+ w_i(x), \quad \text{if } x\in X_i, \forall x\in X\] is a Richter-Peleg multi-utility representation of the quasi-finite partial order $\precsim$.

\end{theorem}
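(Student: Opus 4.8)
The plan is to verify the two defining properties of a Richter--Peleg multi-utility (Definition~\ref{Dmulti}) for the family $\mathcal{V}=\{v_l\}_{l=1}^k$: that each $v_l$ is order-preserving for $\precsim$, and that $\{x\precsim y\}\Leftrightarrow\{v_l(x)\le v_l(y)\ \forall l\}$. The whole argument rests on reading $v_l(x)=u_l(\bar{x})+w_i(x)$ as a real number with \emph{integer part} $u_l(\bar{x})\in\{1,\dots,|\overline{X}|\}$ (here one uses that the $u_l$ are \emph{bijective}, hence integer-valued, by Definition~\ref{lakeste}) and \emph{fractional part} $w_i(x)\in(0,1)$. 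The integer part records the position of $x$ in the finite quotient poset, whose causal relation I denote $\sqsubseteq$ with strict part $\sqsubset$, while the fractional part refines $x$ inside its own $\mathcal{I}_i$-class along the chain $\precsim_i$. Since distinct integer values differ by at least $1$ and the fractional parts lie in $(0,1)$, the integer part always dominates: this is the single arithmetical fact that makes the construction work.

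First I would record the structural dictionary between $\precsim$ on $X$ and $\sqsubseteq$ on $\overline{X}$. Three facts are needed. (i) The restriction of $\precsim$ to each process $X_i$ is exactly $\precsim_i$; indeed $\precsim_i\subseteq\;\rightarrow$, and a causal relation between two events of the same chain together with the reverse chain-comparison would produce a cycle, contradicting that $\precsim$ is a partial order. (ii) \emph{Projection}: $x\prec y$ implies $\bar{x}\sqsubseteq\bar{y}$, and $\bar{x}\sqsubset\bar{y}$ whenever $\bar{x}\neq\bar{y}$, because each generator of $\rightarrow$ (a chain step $\precsim_i$ or a communication in $\mathcal{P}$) projects into a generator of the quotient causal relation, and antisymmetry of the finite poset $\sqsubseteq$ upgrades $\sqsubseteq$ to $\sqsubset$ once the classes differ. (iii) \emph{Lifting}: if $\bar{x}\sqsubset\bar{y}$, then $x\prec y$ for every choice of representatives.

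Fact (iii) is the main obstacle. It reduces to the \emph{interchangeability} of class members: if $c\,\mathcal{I}_i\,c'$, then $c\prec z\Leftrightarrow c'\prec z$ and $z\prec c\Leftrightarrow z\prec c'$ for every $z$ with $\bar{z}\neq\bar{c}$. This is exactly where the definition of $\mathcal{I}_i$ as the intersection of all the trace-equivalences $\sim^{*}_{ij}$ and $\sim^{**}_{ji}$ is used: following a causal path from $c$ to its first exit $e_t\,\mathcal{P}\,e_{t+1}$ out of $X_i$ gives $c\,\overline{\mathcal{P}}_{ij}\,e_{t+1}$, and $c\sim^{*}_{ij}c'$ then yields $c'\,\overline{\mathcal{P}}_{ij}\,e_{t+1}$, so the tail of the path reattaches to $c'$; the incoming case is dual via $\sim^{**}_{ji}$, and the fact that each $\mathcal{I}_i$-class is an order-interval of the chain $\precsim_i$ settles same-process, different-class pairs. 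An arbitrary path $\bar{x}\sqsubset\cdots\sqsubset\bar{y}$ then lifts by applying this to each elementary step and chaining representatives by transitivity.

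With the dictionary in hand the remainder is routine arithmetic. For order-preservation, let $x\prec y$. If $\bar{x}=\bar{y}$ (so $x,y\in X_i$ and $x\prec_i y$) then $u_l(\bar{x})=u_l(\bar{y})$ while $w_i(x)<w_i(y)$, giving $v_l(x)<v_l(y)$; if $\bar{x}\neq\bar{y}$ then by (ii) $\bar{x}\sqsubset\bar{y}$, so $u_l(\bar{x})+1\le u_l(\bar{y})$ and hence $v_l(x)<u_l(\bar{x})+1\le u_l(\bar{y})<v_l(y)$. For the characterization, suppose $v_l(x)\le v_l(y)$ for all $l$. If $x,y$ lie in the same class, subtracting the common integer part leaves $w_i(x)\le w_i(y)$, i.e. $x\precsim_i y$, whence $x\precsim y$. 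Otherwise $u_l(\bar{x})-u_l(\bar{y})\le w_j(y)-w_i(x)<1$ is an integer, forcing $u_l(\bar{x})\le u_l(\bar{y})$ for every $l$; the multi-utility property of $\mathcal{U}$ then gives $\bar{x}\sqsubseteq\bar{y}$, so $\bar{x}\sqsubset\bar{y}$, and (iii) lifts this to $x\prec y$, whence $x\precsim y$. The reverse implication of the characterization is just order-preservation together with the reflexive case, so $\mathcal{V}$ is a Richter--Peleg multi-utility representation of $\precsim$.
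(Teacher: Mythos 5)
Your proposal is correct and follows essentially the same route as the paper's proof: write $v_l(x)=u_l(\bar x)+w_i(x)$ as an integer part plus a fractional part, prove order-preservation by splitting into the same-class and different-class cases, and prove the converse by recovering $u_l(\bar x)\le u_l(\bar y)$ from the integer/fractional decomposition and then either descending to the chain order $\precsim_i$ or lifting from the quotient poset. The only substantive difference is one of rigor, in your favor: your fact (iii) --- lifting $\bar x\sqsubset\bar y$ to $x\prec y$ for arbitrary representatives, proved via the trace-equivalence interchangeability of $\mathcal{I}_i$-class members --- is precisely the step the paper asserts without proof when it claims that $u_l(\bar x)<u_l(\bar y)$ for all $l$ yields a chain of communications $x\,\overline{\mathcal{P}}_{i\,k_1}\,x_{k_1}\cdots\overline{\mathcal{P}}_{k_s\,j}\,y$; likewise, you run the converse with the weak inequalities $v_l(x)\le v_l(y)$ that Definition~\ref{Dmulti} actually requires, whereas the paper's converse is phrased only for strict inequalities.
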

\begin{proof}
\noindent$\Rightarrow: $ 
Let $x,y$ be two elements in $X$ such that $x\prec y$. 

If $x$ and $y$ belong to the same process $X_i$, then $x\prec_i y$ so, $w_i(x)<w_i(y)$. 
Since $x\prec_i y$, 
 it is also true that $x\precsim_{ij}^* y$ as well as $x\precsim_{ji}^{**} y$ for any trace defined on $X_i$. Therefore,  $u_l(\bar{x})\leq u_l(\bar{y})$ is satisfied, for any function $u_l\in \mathcal{U}$. 
Hence, we conclude that $v_l(x)<v_l(y)$ for any
$v_l\in \mathcal{V}$.

If $x$ and $y$ belong to distinct processes,  $X_i$ and $ X_j$ respectively,  then  it holds that $x\overline{\mathcal{P}}_{ij} y$ so, $u_l(\bar{x})<u_l(\bar{y})$ is satisfied for any 
 $u_l\in \mathcal{U}$.
 Therefore, since   $u_l(\bar{x})+1\leq u_l(\bar{y})$ and the codomain of the utilities is the interval $(0,1)$, we conclude that that $v_l(x)<v_l(y)$ for any $v_j\in \mathcal{V}$.
 \medskip
 
 \noindent$\Leftarrow:$ 
 Suppose that $v_l(x)<v_l(y)$ is satisfied for any $v_l\in \mathcal{V}$.
 Since   $v_l(x)= u_l(\bar{x})+ w_i(x)$ (with $x\in X_i$, for any $i=1,..., n$) and the codomain of the utilities is the interval $(0,1)$, 
  $v_l(x)<v_l(y)$ holds true if and only if $u_l(\bar{x})\leq u_l(\bar{y}) $ for any  $u_l\in \mathcal{U}$.
  
  Now, we distinguish two cases. \footnote{Here notice that, since $\mathcal{U}=\{u_l\}_{l=1}^k$ is bijective Richter-Peleg multi-utility associated to the poset arised from the quotient of the traces on the processes,  $u_{l_0}(x)=u_{l_0}(y)$ holds true (for some index $l_0$) if and only if both elements belong to the same class (and so, also to the same process). As a matter of  fact, since they belong to the same class then it holds that $u_{l}(x)=u_{l}(y)$ not only for that index $l_0$ but also for any index $l$.}
  \begin{enumerate}
  \item $u_l(\bar{x})< u_l(\bar{y}) $ for any  $u_l\in \mathcal{U}$. In that case we distinguish again two cases.
   \begin{enumerate}
  \item $x$ and $y$ belong to the same process $X_i$. In that case, since  $u_l(\bar{x})< u_l(\bar{y}) $ for any  $u_l\in \mathcal{U}$, there exists a trace $\precsim_{ij}^*$ or $\precsim_{ji}^{**}$ on $X_i$ such that $x\prec_{ij}^* y$ or $x\prec_{ji}^{**} y$. Thus, we conclude that $x\prec_i y$ and, hence, $x\prec y$.
   \item $x$ and $y$ belong to distinct processes,  $X_i$ and $ X_j$ respectively. In that case, since  $u_l(\bar{x})< u_l(\bar{y}) $ for any  $u_l\in \mathcal{U}$, there exist elements $x_{k_1}\in X_{k_1},...,x_{k_s}\in X_{k_s}$ (for some $k_1, ...,k_s\in \{1,...,n\}$)    such that $x\mathrel{\overline{\mathcal{P}}}_{i\,k_1} x_{k_1} \mathrel{\overline{\mathcal{P}}}_{k_1\,k_2}\cdots \mathrel{\overline{\mathcal{P}}}_{k_{s-1}\, k_s} x_k \mathrel{\overline{\mathcal{P}}}_{k_s\,j} y$. Thus, we conclude that $x\prec y$.
  \end{enumerate}
    \item $u_l(\bar{x})= u_l(\bar{y}) $ for any  $u_l\in \mathcal{U}$. In that case $x$ and $y$ belong to the same quotient class and, therefore, to the same process. Thus, since  $u_l(\bar{x})= u_l(\bar{y}) $ for any  $u_l\in \mathcal{U}$ and  $v_l(x)<v_l(y)$  for any $v_l\in V$, it holds that $w_i(x)<w_i(y)$. Then, we conclude that $x\prec_i y$ and, hence, $x\prec y$.
  \end{enumerate}
\end{proof}

 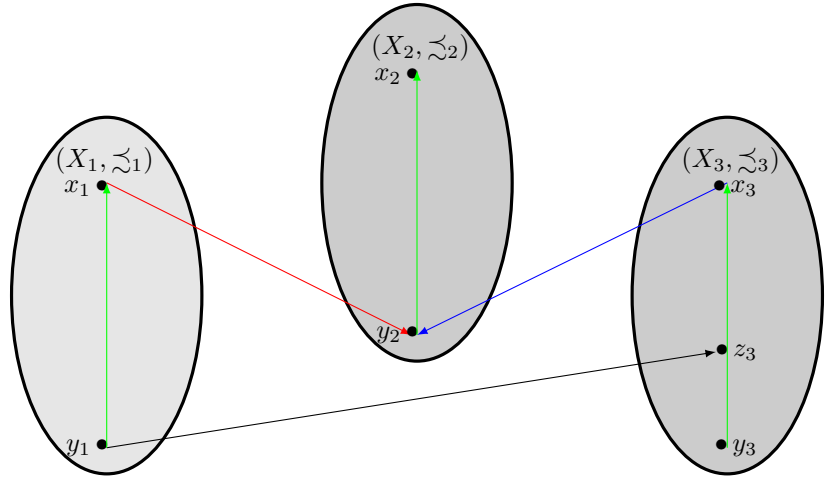
\begin{figure}[htbp]
\begin{center}
\begin{tikzpicture}[scale=0.75]
\begin{scope}[very thick]

\begin{scope}[very thick]

\draw[fill=gray!20!white] (-3,0) 
ellipse (48pt and 90pt);

\draw[fill=gray!40!white] (2.5,2) 
ellipse (48pt and 90pt);

\draw[fill=gray!40!white] (8,0) 
ellipse (48pt and 90pt);

\end{scope}
\end{scope}
\draw[red, -latex] (-3,2) -- (2.4,-0.7);
\draw[black, -latex] (-3,-2.7) -- (7.8,-1);

\draw[blue, -latex] (8,2) -- (2.5,-0.7);

    \draw (-2.8,-2.7) node[anchor=east] { $y_1\, \bullet$}; 
      \draw (-2.8,1.9) node[anchor=east] { $x_1\, \bullet$}; 
    \draw (2.7,-0.7) node[anchor=east] { $y_2\, \bullet$};   
     \draw (2.7,3.9) node[anchor=east] { $x_2\,\bullet$};    
       \draw (8.7,-2.7) node[anchor=east] { $\bullet \, y_3$};  
  \draw (8.7,-1) node[anchor=east] { $\bullet \, z_3$};
     \draw (8.7,1.9) node[anchor=east] { $ \bullet \, x_3$}; 

\draw[green, -latex] (-3,-2.7) -- (-3,2);
\draw[green, -latex] (2.5,-0.7) -- (2.5,4);
\draw[green, -latex] (8,-2.7) -- (8,2);
\draw (-2,2.4) node[anchor=east] {  $(X_1, \precsim_1)$} ;
\draw (3.6,4.4) node[anchor=east] {  $(X_2, \precsim_2)$} ;
\draw (9.1,2.4) node[anchor=east] {  $(X_3, \precsim_3)$} ;
\end{tikzpicture}\\
\medskip
$ $
\caption{The distributed system of three processes of Example~\ref{Equotient2}.}
\label{Fbeforequotient2}
\end{center}
\end{figure}

\begin{example}\label{Equotient2}\rm
Let $(X_1, \precsim_1), (X_2, \precsim_2)$ and $(X_3, \precsim_3)$ be three totally ordered sets (they may be uncountable, but representable in any case) such that $x_i= \max\{(X_i, \precsim_i)\} $ and $y_i= \min\{(X_i, \precsim_i)\} $, for $i=1,2,3.$
Suppose that there is a communication between these sets (defined by   $\mathcal{P}_{12},\mathcal{P}_{13}$ and $\mathcal{P}_{32}$) as it is shown in Figure~\ref{Fbeforequotient2}, such that $y_1\mathrel{\mathcal{P}}_{13} z_3, $ $x_1\mathrel{\mathcal{P}}_{12} y_2$ and $ x_3\mathrel{\mathcal{P}}_{32} y_2$.
 
If we focus on the quotient we achieve the poset of Figure~\ref{Fquotient2}. 
 Here, it is straightforward to check that $\overline{x}_1=U_{\prec_1}(y_1)\subseteq X_1$, $\overline{y}_1=L_{\precsim_1}(y_1)\subseteq X_1$, $\overline{x}_2=X_2$, 
$\overline{x}_3=U_{\precsim_3}(z_3)\subseteq X_3$ and $\overline{y}_3=L_{\prec_3}(z_3)\subseteq X_3$.

 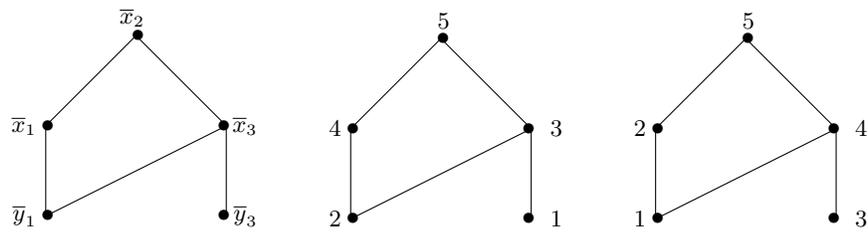
\begin{figure}[htbp]
\begin{center} 
\begin{tikzpicture}[scale=1.2] 
    \draw[] (0,2)--(1,3);
    \draw[] (2,2)--(1,3);
    \draw[] (0,1)--(0,2);
    \draw[] (2,1)--(2,2);
    \draw[] (0,1)--(2,2);
\draw (0,2) node[anchor=east] {\small $\overline{x}_1$};  
\draw (0,1) node[anchor=east] {\small $\overline{y}_1$};   
\draw (2.45,2) node[anchor=east] {\small $\overline{x}_3$};  
\draw (2.45,1) node[anchor=east] {\small $\overline{y}_3$};  
\draw (1.2,3.2) node[anchor=east] {\small $\overline{x}_2$};

\draw (0.2,2) node[anchor=east] {$\bullet$};  
\draw (0.2,1) node[anchor=east] {$\bullet$};   
\draw (2.15,2) node[anchor=east] {$\bullet$};  
\draw (2.15,1) node[anchor=east] {$\bullet$};  
\draw (1.2,3) node[anchor=east] {$\bullet$};

\end{tikzpicture}\qquad
\begin{tikzpicture}[scale=1.2] 
       \draw[] (0,2)--(1,3);
    \draw[] (2,2)--(1,3);
    \draw[] (0,1)--(0,2);
    \draw[] (2,1)--(2,2);
        \draw[] (0,1)--(2,2);
\draw (0,2) node[anchor=east] {\small $4$};  
\draw (0,1) node[anchor=east] {\small $2$};  
\draw (2.45,2) node[anchor=east] { \small $3$};  
\draw (2.45,1) node[anchor=east] {\small $1$};  
\draw (1.2,3.2) node[anchor=east] {\small $5$};

\draw (0.2,2) node[anchor=east] {$\bullet$};  
\draw (0.2,1) node[anchor=east] {$\bullet$};   
\draw (2.15,2) node[anchor=east] {$\bullet$};  
\draw (2.15,1) node[anchor=east] {$\bullet$};  
\draw (1.2,3) node[anchor=east] {$\bullet$};

\end{tikzpicture}\qquad
\begin{tikzpicture}[scale=1.2] 
       \draw[] (0,2)--(1,3);
    \draw[] (2,2)--(1,3);
    \draw[] (0,1)--(0,2);
    \draw[] (2,1)--(2,2);
        \draw[] (0,1)--(2,2);
\draw (0,2) node[anchor=east] {\small $2$};  
\draw (0,1) node[anchor=east] {\small $1$};  
\draw (2.45,2) node[anchor=east] {\small $4$};  
\draw (2.45,1) node[anchor=east] {\small $3$};  
\draw (1.2,3.2) node[anchor=east] {\small $5$};

\draw (0.2,2) node[anchor=east] {$\bullet$};  
\draw (0.2,1) node[anchor=east] {$\bullet$};   
\draw (2.15,2) node[anchor=east] {$\bullet$};  
\draw (2.15,1) node[anchor=east] {$\bullet$};  
\draw (1.2,3) node[anchor=east] {$\bullet$};
\end{tikzpicture}\\
\medskip
$ $
\caption{Hasse diagram and the corresponding bijective Richter-Peleg multi-utility $\{u_1,u_2\}$ of   the quotient associated to the distributed system of Figure~\ref{Fbeforequotient2}.}
\label{Fquotient2}
\end{center}
\end{figure}

So, now, given $w_1, w_2$ and $w_3$ three representations (that take values on $(0,1)$) of $\precsim_1$, $\precsim_2$ and $\precsim_3$, respectively, we can easily construct a representation of the distributed system through these functions and the utilities of the poset, as commented in Theorem~\ref{Prprep}:
\[v_1(x)= u_1(\bar{x})+ w_i(x), \quad \text{if } x\in X_i, \forall x\in X, \]
\[v_2(x)= u_2(\bar{x})+ w_i(x), \quad \text{if } x\in X_i, \forall x\in X.\]

It is straightforward to see that $\{v_1,v_2\}$ is also a Richter-Peleg multi-utility of the causal relation (see Proposition~\ref{Pduality}).

\end{example}

In the theorem before, the functions of the representation are defined through the sum of two functions. 
Hence, it is possible to study the continuity of the functions of the representation by means of the continuity of the other ones.

\begin{theorem}\label{Tdscont}
Let  $(\bigcup_{i=1}^n (X_i, \precsim_i),  \mathrel{\mathcal{P}}=\bigcup_{i\neq j} \mathrel{\mathcal{P}_{ij}}  )$ be a distributed system where each set $X_i$ is endowed with a topology $\tau_i$. Let $\mathcal{I}_i$ be the equivalence relation  on $X_i$ emerged from  the intersection of all the equivalence relations $\sim_{ij}^*$ and $\sim_{ji}^{**} $ (for any $ i\neq j$) on $X_i$. 
 Assume that the following conditions are satisfied for each  $i=1,...,n$:
\begin{enumerate}
\item[$(i)$] The total orders $\precsim_i$ are $\tau_i$-continuous and representable.
\item[$(ii)$] Each class $\overline{x}=\{y\in X_i\, \colon \, y\mathcal{I}_i x\}$ is open in $X_i$, for any $x\in X_i$.
\end{enumerate}

Then, the distributed system is continuously representable.
\end{theorem}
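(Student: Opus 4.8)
The plan is to run the construction of Theorem~\ref{Prprep} with \emph{continuous} ingredients, so that the Richter-Peleg multi-utility it produces for the causal relation is automatically continuous on each process. Recall that the functions there have the form $v_l(x)=u_l(\bar x)+w_i(x)$ for $x\in X_i$, that is, a ``chain part'' $w_i$ plus a ``quotient part'' $u_l(\bar\cdot)$; I would secure the continuity of each summand separately and then add.

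First I would build the chain parts. By hypothesis $(i)$ each total order $\precsim_i$ is $\tau_i$-continuous and representable, so (combining perfect separability, Theorem~\ref{qwert}(a), with $\tau_i$-continuity via the Debreu open gap lemma, see \cite{BRME}) it admits a continuous utility; composing with a strictly increasing homeomorphism of $\mathbb{R}$ onto $(0,1)$ we may take $w_i\colon X_i\to(0,1)$ continuous. Next I would build the quotient parts. Because the communication $\mathcal{P}$ is finite, each quotient $\overline X_i=X_i/\mathcal{I}_i$ is finite, whence the induced structure $(\bigcup_{i=1}^n(\overline X_i,\overline{\precsim}_i),\mathcal{P})$ is a finite poset; a realizer by $k$ linear extensions furnishes a bijective Richter-Peleg multi-utility $\mathcal{U}=\{u_l\}_{l=1}^k$ in the sense of Definition~\ref{lakeste}. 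Feeding $\{w_i\}$ and $\mathcal{U}$ into Theorem~\ref{Prprep} already yields that $\mathcal{V}=\{v_l\}_{l=1}^k$ is a Richter-Peleg multi-utility of $\precsim$, so only continuity remains to be checked.

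The decisive step is the continuity of $x\mapsto u_l(\bar x)$ on $(X_i,\tau_i)$. This map is constant on each $\mathcal{I}_i$-class and takes only finitely many values, one per class of the finite quotient $\overline X_i$; hence for any open $V\subseteq\mathbb{R}$ its preimage is the union of those (finitely many) classes whose common value lies in $V$. By hypothesis $(ii)$ every class is $\tau_i$-open, so this preimage is open and the map is continuous. Adding the continuous $w_i$, each restriction $v_l|_{X_i}$ is $\tau_i$-continuous. Setting $u_i^l:=v_l|_{X_i}$ therefore gives a family of continuous functions that, by Proposition~\ref{Pduality} together with the verification carried out in Theorem~\ref{Prprep}, constitutes a continuous representation of the distributed system in the sense of Definition~\ref{representable}.

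I expect the continuity of the quotient part to be the only real obstacle, and it is exactly the point where both the standing finiteness (the quotient is finite because the communication is finite) and hypothesis $(ii)$ enter: a function that is constant on each class is continuous as soon as the finitely many classes are open, whereas without $(ii)$ the step function $u_l(\bar\cdot)$ would in general be discontinuous across the boundaries between classes, and the sum $v_l$ would then fail to be continuous even though it still represents $\precsim$.
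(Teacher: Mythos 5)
Your proof is correct and takes essentially the same approach as the paper's: both apply the construction of Theorem~\ref{Prprep} with $v_l(x)=u_l(\bar x)+w_i(x)$, obtain a continuous $w_i$ from hypothesis $(i)$ via the standard continuous-representability results in \cite{BRME}, and deduce continuity of $x\mapsto u_l(\bar x)$ from hypothesis $(ii)$, since it is constant on the finitely many $\tau_i$-open classes of the finite quotient. Your write-up simply makes explicit the details the paper leaves implicit (the gap-lemma step, the realizer producing the bijective Richter-Peleg multi-utility on the quotient poset, and the step-function continuity argument).
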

\begin{proof}
By Theorem~\ref{Prprep}, we may construct a representation of the distributed system such that each function $v_l$ is defined in $X_i$ by $v_l(x)= u_l(\bar{x})+ w_i(x)$ (as stated in Theorem~\ref{Prprep}). By hypothesis $(i)$, we may assume that $w_i$ is continuous \cite{BRME}, and by condition $(ii)$ it is straightforward to see that   $u_l$ is continuous  too. Hence,   each function $v_l$ is continuous in $X_i$, for any $i=1,...,n$.
\end{proof}

\begin{remark}
The reciprocal of the theorem above is not true 
 (see Example~11 and Remark~12 of \cite{dsjmp}).
\end{remark}

From Theorem~\ref{Tdscont} and Proposition~\ref{Pduality} the following corollary is deduced, which may be useful 
if we are focusing on a quasi-finite partially ordered set $(X,\precsim)$ endowed with a topology.

\begin{corollary}
Let $\precsim$ be a quasi-finite partial order on $(X,\tau)$. Assume that there exists a 
 distributed system $(\bigcup_{i=1}^n (X_i, \precsim_i),  \mathrel{\mathcal{P}} )$ such that the following conditions are satisfied for each  $i=1,...,n$:
\begin{enumerate}
\item[$(i)$] The total orders $\precsim_i$ are $\tau_i$-continuous and representable, where $\tau_i=\tau_{|_{X_i}}$.
\item[$(ii)$] Each class $\overline{x}=\{y\in X_i\, \colon \, y\mathcal{I}_i x\}$ is open in $X_i$, for any $x\in X_i$.
\item[$(iii)$] Any open set $U\in \tau_i$ is also open in $\tau$.
\end{enumerate}

Then, there exists a continuous and finite Richter-Peleg multi-utility of the partial order $\precsim$ on $(X,\tau)$.
\end{corollary}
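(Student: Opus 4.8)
The plan is to read the statement off directly from Theorem~\ref{Tdscont} and Proposition~\ref{Pduality}, with condition $(iii)$ entering only at the very last gluing step. First I would note that conditions $(i)$ and $(ii)$ of the corollary are exactly the hypotheses of Theorem~\ref{Tdscont} once one sets $\tau_i=\tau_{|_{X_i}}$, the relation $\mathcal{I}_i$ being defined identically in both places. Hence Theorem~\ref{Tdscont} applies and returns a continuous representation $\{\{u_i^l\}_{i=1}^n\}_{l=1}^k$ of the distributed system, meaning that each $u_i^l$ is $\tau_i$-continuous on $X_i$. Looking inside that theorem's proof, the representation is the one produced by Theorem~\ref{Prprep}, hence indexed by a \emph{bijective} Richter-Peleg multi-utility $\{u_l\}_{l=1}^k$ of the \emph{finite} quotient poset $(\bigcup_{i=1}^n (X_i\slash \mathcal{I}_i, \overline{\precsim}_i), \mathcal{P})$; since that poset is finite, $k<\infty$ and the family is finite.

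Next I would apply Proposition~\ref{Pduality}: setting $w_l(x)=u_i^l(x)$ whenever $x\in X_i$, for each $l\in\{1,\dots,k\}$, gives a finite family $\{w_l\}_{l=1}^k$ that represents the causal relation $\rightarrow$ as a Richter-Peleg multi-utility, and by hypothesis $\rightarrow$ coincides with the quasi-finite partial order $\precsim$. At this point a finite Richter-Peleg multi-utility of $\precsim$ is already in hand; what is left is to promote the piecewise continuity of the $w_l$ to continuity on the whole space $(X,\tau)$.

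This is the one point not covered by the two cited results --- indeed the remark following Proposition~\ref{Pduality} flags precisely that the change of domain matters for continuity --- and it is where condition $(iii)$ is used. I would argue fibrewise: for an open $V\subseteq\mathbb{R}$,
\[
w_l^{-1}(V)=\bigcup_{i=1}^n \big(w_l^{-1}(V)\cap X_i\big)=\bigcup_{i=1}^n (u_i^l)^{-1}(V),
\]
and each $(u_i^l)^{-1}(V)$ is $\tau_i$-open by $\tau_i$-continuity of $u_i^l$, hence $\tau$-open by $(iii)$; a finite union of $\tau$-open sets is $\tau$-open, so $w_l$ is $\tau$-continuous. Equivalently, $(iii)$ says each $X_i$ is $\tau$-open, so the $X_i$ form a finite open partition along which continuity can be tested locally. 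Combining, $\{w_l\}_{l=1}^k$ is a continuous, finite Richter-Peleg multi-utility of $\precsim$ on $(X,\tau)$.

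The only genuine obstacle is this last gluing: Theorem~\ref{Tdscont} certifies continuity solely on each fibre $(X_i,\tau_i)$, while the goal is continuity on $(X,\tau)$, and Proposition~\ref{Pduality} carries no topological content. Condition $(iii)$ is exactly the compatibility hypothesis that bridges the two, and without it the locally continuous $w_l$ could fail to be globally continuous; everything else is a direct transcription of the quoted results.
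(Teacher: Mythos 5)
Your proposal is correct and follows exactly the route the paper intends: the paper states this corollary without a written-out proof, saying only that it is ``deduced from Theorem~\ref{Tdscont} and Proposition~\ref{Pduality}'', which is precisely your chain of reasoning. Your fibrewise gluing argument (each $(u_i^l)^{-1}(V)$ is $\tau_i$-open, hence $\tau$-open by $(iii)$, and a finite union of $\tau$-open sets is $\tau$-open) is the correct and only missing detail, and it correctly identifies condition $(iii)$ as the bridge from continuity on each process to continuity on $(X,\tau)$.
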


 \section{Further comments} \label{sfur} 
 
For a sake of brevity and clearness, in the present paper we have argued on total orders and partial orders, however, it can be easily generalised to total preorders and preorders.

The sections related to representability (and the aggregation problem) may be implemented through partial functions, using the idea of partial representability (see \cite{partial}).
 In order to illustrate this final idea, we include the following result:
 

\begin{proposition}
Let  $(X_1,\precsim_1), (X_2,\precsim_2)$ and $(X_3,\precsim_3)$ be three representable  totally  ordered sets and $\mathcal{P}_{12}$ and $\mathcal{P}_{23}$ two communications from $X_1 $ to $X_2$ and from $X_2$ to $X_3$, respectively. Thus, the structure arised is a distributed system of three processes with line communication.  Assume that each biorder is representable, such that:
\[  
x\overline{\mathcal{P}}_{12} y \iff u_1(x)<v_1(y), \text{ for any } x\in X_1, y\in X_2,\]
\[
y\overline{\mathcal{P}}_{23} z \iff v_2(y)<w_1(z), \text{ for any }  y\in X_2, z\in X_3,
\]
as well as the biorder $\overline{\mathcal{P}}_{13}$ emerged from the composition $\overline{\mathcal{P}}_{12}\circ \overline{\mathcal{P}}_{23}$ (i.e.,  $x \overline{\mathcal{P}}_{13} z \iff x\overline{\mathcal{P}}_{12} y \overline{\mathcal{P}}_{23} z$, for some $y\in X_2$) is representable by $(u_2,w_2):$
\[  
x\overline{\mathcal{P}}_{13} z \iff u_2(x)<w_2(z), \text{ for any } x\in X_1, z\in X_3.\]

(Here, we  assume  that the functions $u_1,u_2,v_1, v_2, w_1$ and $w_2$ takes values on $(0,1)$, as well as they also represent the total  order of the corresponding set).
 Then, the associated causal relation $\rightarrow$ is partially representable (see \cite{partial}) through the functions $\{\sigma_1, \sigma_2, \sigma_3\}$ defined as follows:
\end{proposition}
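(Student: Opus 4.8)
The plan is to build, for each ordered pair of processes that can be causally linked, a single partial utility obtained by gluing the two members of the corresponding representing pair and leaving it undefined on the remaining process. Concretely, I would set
\[
\sigma_1(t)=\begin{cases} u_1(t) & t\in X_1,\\ v_1(t) & t\in X_2, \end{cases}\qquad
\sigma_2(t)=\begin{cases} v_2(t) & t\in X_2,\\ w_1(t) & t\in X_3, \end{cases}\qquad
\sigma_3(t)=\begin{cases} u_2(t) & t\in X_1,\\ w_2(t) & t\in X_3, \end{cases}
\]
so that $\sigma_1$ lives on $X_1\cup X_2$, $\sigma_2$ on $X_2\cup X_3$ and $\sigma_3$ on $X_1\cup X_3$, each being exactly the pair that represents the biorder attached to its directed edge, namely $\overline{\mathcal{P}}_{12}$, $\overline{\mathcal{P}}_{23}$ and $\overline{\mathcal{P}}_{13}$ respectively. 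Since every member of these pairs also represents the total order of its own process, each $\sigma_i$ is increasing along the two chains it meets.

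To check that $\{\sigma_1,\sigma_2,\sigma_3\}$ is a partial representation of $\rightarrow$, I would argue by cases according to where two events $a,b\in X$ sit. If $a$ and $b$ lie in the same process $X_i$, then $a\rightarrow b$ means $a\prec_i b$, and this is detected by whichever of the $\sigma_i$ restricts to a utility of $\precsim_i$; two of them always do, and they agree in sign because both represent the same total order. The three genuinely cross-process situations are then handled one apiece: a pair in $X_1\times X_2$ is read off $\sigma_1$ through $u_1(a)<v_1(b)\iff a\,\overline{\mathcal{P}}_{12}\,b$, a pair in $X_2\times X_3$ off $\sigma_2$ through $v_2(a)<w_1(b)\iff a\,\overline{\mathcal{P}}_{23}\,b$, and a pair in $X_1\times X_3$ off $\sigma_3$ through $u_2(a)<w_2(b)\iff a\,\overline{\mathcal{P}}_{13}\,b$. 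Here I would also record that, because the communication is a line communication, one has $\rightarrow|_{X_1\times X_2}=\overline{\mathcal{P}}_{12}$ and $\rightarrow|_{X_2\times X_3}=\overline{\mathcal{P}}_{23}$ immediately (no reverse messages create alternative paths).

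The step I expect to be delicate is the identification of the causal relation restricted to $X_1\times X_3$ with the composite biorder $\overline{\mathcal{P}}_{13}=\overline{\mathcal{P}}_{12}\circ\overline{\mathcal{P}}_{23}$. Since there is neither a direct nor a reverse message between $X_1$ and $X_3$, any causal path from an event of $X_1$ to an event of $X_3$ must enter $X_2$ at a receipt and leave it at a later send; unfolding such a path shows that $a\rightarrow c$ holds precisely when $a\,\overline{\mathcal{P}}_{12}\,y\,\overline{\mathcal{P}}_{23}\,c$ for some $y\in X_2$, which is exactly the hypothesis that lets me invoke the representation $(u_2,w_2)$ on this edge. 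The second, more conceptual obstacle is that a single $\sigma_i$ is defined on both members of a cross-process pair in only one direction, so a naive symmetric reading of the inequalities would reintroduce the spurious comparison of Example~\ref{exkontra} (where $u(a)<w(\alpha)$ yet $a\nrightarrow\alpha$). Keeping each $\sigma_i$ \emph{undefined} on the third process is precisely what blocks those false positives and forces the representation to be partial and directional; verifying that no spurious inequality survives under this convention is the crux of the argument.
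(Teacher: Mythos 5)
Your construction is genuinely different from the paper's, and it has a gap that makes it fail. The paper does \emph{not} take the three ``edge'' pairs each left undefined on the remaining process. It sets $\sigma_1=u_1$ on $X_1$, $v_1$ on $X_2$, $w_1+1$ on $X_3$; $\sigma_2=u_1$ on $X_1$, $v_2+1$ on $X_2$, $w_1+1$ on $X_3$; and only $\sigma_3$ (namely $u_2$ on $X_1$, $w_2$ on $X_3$, undefined on $X_2$) is genuinely partial. The representability criterion is symmetric: $x\rightarrow y$ if and only if $\sigma_i(x)<\sigma_i(y)$ for \emph{every} $i$ such that $\sigma_i$ is defined at both points, with at least one such $i$. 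The $+1$ shifts (available because all utilities take values in $(0,1)$) stack the processes in causal order, so that on every \emph{backward} pair some function defined at both points violates the inequality; this is what eliminates spurious comparisons.

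The gap in your proposal is exactly those backward cross-process pairs, and leaving each $\sigma_i$ undefined on the third process does not cure it: the false positives live inside the two processes where $\sigma_i$ \emph{is} defined. Concretely, take $X_1=\{a_1,a_2\}$ with $a_1\prec_1 a_2$, $X_2=\{b\}$, $\mathcal{P}_{12}=\{(a_1,b)\}$, $\mathcal{P}_{23}$ arbitrary, and the legitimate representation $u_1(a_1)=0.3$, $u_1(a_2)=0.7$, $v_1(b)=0.5$. In your family, $\sigma_1$ is the \emph{only} function defined at both $b$ and $a_2$, and $\sigma_1(b)=0.5<0.7=\sigma_1(a_2)$; hence your family certifies $b\rightarrow a_2$, which is false, since no message ever travels from $X_2$ to $X_1$. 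The same failure occurs on $X_3\times X_2$ and $X_3\times X_1$. Your forward-direction analysis is correct (including the identification $\rightarrow|_{X_1\times X_3}=\overline{\mathcal{P}}_{13}$ forced by the line communication), and your closing paragraph correctly senses that blocking spurious inequalities is the crux --- but your construction does not block them. What is needed is overlapping domains: extend $\sigma_1$ and $\sigma_2$ to all of $X$ with the $+1$ device, so that every backward pair is vetoed by some commonly defined function, and keep only $(u_2,w_2)$ partial, since --- as Example~\ref{exkontra} shows --- that pair need not admit any compatible extension to $X_2$.
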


 \[
  \sigma_1(x)=\left\{
  \begin{array}{ll}
   u_1(x) &\mbox{; } x\in X_1 \\[4pt]
  v_1(x) &\mbox{; } x\in X_2 \\[4pt]
   w_1(x)+1 &\mbox{; } x\in X_3 \\[4pt]
  \end{array}\right.\qquad
  \sigma_2(x)=\left\{
  \begin{array}{ll}
   u_1(x) &\mbox{; } x\in X_1 \\[4pt]
  v_2(x)+1 &\mbox{; } x\in X_2 \\[4pt]
   w_1(x)+1 &\mbox{; } x\in X_3 \\[4pt]
  \end{array}\right.\]
  
  \[
  \sigma_3(x)=\left\{
  \begin{array}{ll}
   u_2(x) &\mbox{; } x\in X_1 \\[4pt]
  \emptyset &\mbox{; } x\in X_2 \\[4pt]
   w_2(x) &\mbox{; } x\in X_3 \\[4pt]
  \end{array}\right.
\]

So that, $\quad x\rightarrow y $ if and only if $\sigma(x)<\sigma(y)$ for some $\sigma\in \{\sigma_i\}_{i=1}^3$ as well as $\sigma_i(x)<\sigma_i(y)$ for any $i=1,2,3$ such that $\sigma_i$ is defined on both $x$ and $y$.




\section*{Acknowledgments}

Asier Estevan acknowledges financial support from the Ministry of Science and Innovation of Spain under grant  
 PID2021-127799NB-I00   as well as from the UPNA  under grant JIUPNA19-2022.

\end{document}